\theoremstyle{plain}% Theorem-like structures provided by amsthm.sty
\newtheorem{theorem}{Theorem}[section]
\newtheorem{lemma}[theorem]{Lemma}
\newtheorem{corollary}[theorem]{Corollary}
\theoremstyle{definition}
\newtheorem{definition}[theorem]{Definition}
\newtheorem{example}[theorem]{Example}
\newtheorem{assumption}[theorem]{Assumption}
\theoremstyle{remark}
\newtheorem{remark}{Remark}
\begin{document}

%\articletype{ARTICLE TEMPLATE}% Specify the article type or omit as appropriate

\title{Generalized Location-Scale Mixtures of Elliptical Distributions: Definitions and Stochastic Comparisons}

\author{
\name{Tong Pu\textsuperscript{a,b}, Yiying Zhang\textsuperscript{a} and Chuancun Yin\textsuperscript{b}\thanks{Corresponding author: Chuancun Yin. Email: {ccyin@qfnu.edu.cn}}}
\affil{\textsuperscript{a}Department of Mathematics, Southern University of Science and Technology, Shenzhen 518055, Guangdong, China; \textsuperscript{b}School of Statistics and Data Science, Qufu Normal University, Qufu 273165, Shandong, China}}

\maketitle

\begin{abstract}
  This paper proposes a unified class of generalized location-scale mixture of multivariate elliptical distributions and studies integral stochastic orderings of random vectors following such distributions. Given a random vector $\boldsymbol{Z}$, independent of $\boldsymbol{X}$ and $\boldsymbol{Y}$, the scale parameter of this class of distributions is mixed with a function $\alpha(\boldsymbol{Z})$ and its skew parameter is mixed with another function $\beta(\boldsymbol{Z})$. Sufficient (and necessary) conditions are established for stochastically comparing different random vectors stemming from this class of distributions by means of several stochastic orders including the usual stochastic order, convex order, increasing convex order, supermodular order, and some related linear orders. Two insightful assumptions for the density generators of elliptical distributions, aiming to control the generators' tail, are provided to make stochastic comparisons among mixed-elliptical vectors. Some applications in applied probability and actuarial science are also provided as illustrations on the main findings.
\end{abstract}

\begin{keywords}
Asymmetric distributions;
elliptical distribution;
integral stochastic orderings;
location-scale mixture; 
skew-normal distributions.
\end{keywords}

\section{Introduction and Motivation} \label{introduction}
Stochastic orders are partial orders and serve as a powerful tool for comparing different interested random variables. Stochastic orderings have found numerous applications in various of research fields like statistics and probability (\cite{Cal2006stochastic}), actuarial science (\cite{denuit2006actuarial}, \cite
{Bauerle2014}), and operations research (\cite{fabian2011Processing}). Different kinds of stochastic orders have different properties, meanings and applications. Interested readers may refer to \cite{denuit2006actuarial}, \cite{muller2002comparison} and \cite{shaked2007stochastic} for more details.

%conditions for stochastic orderings were analyzed and obtained by pure probabilistic methods and

\par Many stochastic orders can be characterized by the integral stochastic orders, seeking for orderings between random vectors $\boldsymbol{X}$ and $\boldsymbol{Y}$ by comparing $Ef(\boldsymbol{X})$ and $Ef(\boldsymbol{Y})$, where $f \in \boldsymbol{F}$ and $\boldsymbol{F}$ is a certain class of functions. An insightful treatment for this class of orders can be found in \cite{muller2001stochastic}, who provided necessary and sufficient conditions for stochastic ordering results of multivariate normal distributions. They firstly established an identity for $Ef(\boldsymbol{Y}) - Ef(\boldsymbol{X})$, and then derived sufficient conditions for various stochastic orderings by using this identity. \cite{ding2004some} extended these results to Kotz-type distributions which form a special class of elliptical symmetric distributions. In recent years, some other integral stochastic orderings of multivariate elliptical distribution have been studied by many researchers such as \cite{yin2019stochastic} (by aforementioned identity) and \cite{Ansari2020Ordering} (by using pure probabilistic approaches). For relevant papers for other distributions, we refer the reader to the study of stochastic orderings of skew-normal distributions \citep{jamali2020comparison}, multivariate normal mean-variance mixtures \citep{jamali2020integral}, skew-normal scale-shape mixtures \citep{jamali2020integral}, scale mixtures of the multivariate skew-normal distributions \citep{Amiri2020Linear}, and matrix variate skew-normal distributions \citep{pu2022identity}.

\par Elliptical distributions, which can be seen as convenient extensions of multivariate normal distributions, were introduced by \cite{Kelker1970Distribution}. Some properties and characterizations of this family of distributions were discussed in \cite{Fang1990}. Elliptical distributions provide an attractive tool for modeling many practical scenarios in statistics, economics, finance and actuarial science since they can describe fat or light tails of distributions due to the flexibility of the density functions. Interested readers are referred to the three monographs of \cite{Fang1990}, \cite{Gupta2013} and \cite{McNeil2015}. In the literature, many interesting stochastic comparison results have been established for random vectors with elliptical distributions. For example, \cite{Davidov2013linear} showed an important result that the positive linear usual stochastic order coincides with the multivariate usual stochastic order for elliptically distributed random vectors. Some sufficient conditions were obtained in \cite{landsman2006stochastic} for comparing random vectors having bivariate elliptical distributions in the sense of the convex order, the increasing convex order and the concordance order. \cite{pan2016stochastic} studied the convex and increasing convex orderings of multivariate elliptical random vectors and derived some necessary and sufficient conditions.

\par However, the class of elliptical distributions fails to capture the skewness of data because of its symmetrical characteristic. Many researchers tries to generalize the class of elliptical distributions to fill this gap. It is well known that the scale mixture of elliptical distributions is still elliptical; therefore, adding a factor of skewness into the elliptical model seems to be a straightforward method. \cite{Barndorff1982Normal} introduced the mean-variance mixture of multinormal distributions generated by the following stochastic representation
\begin{equation} \label{NMVM-rep}
\boldsymbol{Y} \overset{d}{=} \boldsymbol{\mu} +\sqrt{\boldsymbol{Z}} \boldsymbol{X} + \boldsymbol{Z}\boldsymbol{\delta},
\end{equation}
where $\boldsymbol{\mu}, \boldsymbol{\delta} \in \mathbb{R}^n$, $\boldsymbol{X} \sim N_n(0,\boldsymbol{\Sigma})$ and $\boldsymbol{Z}$ is a non-negative random quantity. This class of distributions plays an important role in statistical modeling; see, for example, \cite{Jones2004}, \cite{kim2019Tail} and \cite{jamali2020comparison}. Besides, one can easily find that it is a natural extension to assume that $\boldsymbol{X}$ is an elliptical distributed vector. With this setting of $\boldsymbol{X}$, the distribution presented in (\ref{NMVM-rep}) becomes the location-scale mixture of elliptical distributions. Some interesting basic properties and applications about the location-scale mixture of elliptical distributions can be found in \cite{zuo2021}.

\par An alternative way to take skewness into consideration was introduced by \cite{Arslan2008} through considering the following stochastic representation
\begin{equation} \label{GHSS-rep}
\boldsymbol{Y} \overset{d}{=} \boldsymbol{\mu} +\frac{1}{\sqrt{\boldsymbol{Z}}} \boldsymbol{X} + \frac{1}{\boldsymbol{Z}}\boldsymbol{\delta},
\end{equation}
where $\boldsymbol{X}$ follows elliptical distribution with location parameter $0$ and scale parameter $\boldsymbol{\Sigma}$ and $\boldsymbol{Z}$ is a non-negative random quantity follows beta distribution. This class of distributions is termed as the ``Generalized Hyperbolic Skew-Slash Distributions''.

\par In finance area, \cite{simaan1993} argued that the vector of returns on a set of concerned financial assets should be represented as
\begin{equation} \label{fina-rep}
\boldsymbol{Y} \overset{d}{=} \boldsymbol{\mu} + \boldsymbol{X} + \boldsymbol{Z}\boldsymbol{\delta},
\end{equation}
where $\boldsymbol{X}$ follows elliptical distribution with location parameter $0$ and scale parameter $\boldsymbol{\Sigma}$ and $\boldsymbol{Z}$ is a non-negative random quantity. It is worth noting that, in models (\ref{NMVM-rep}) and (\ref{GHSS-rep}), both location and scale parameters are mixed with the same positive random variable $\boldsymbol{Z}$, while in model (\ref{fina-rep}) only the location parameter is mixed with $\boldsymbol{Z}$. It is clear that the class in (\ref{fina-rep}) cannot be obtained from the class in (\ref{NMVM-rep}) nor the class in (\ref{GHSS-rep}). \cite{Adcock2012} studied some special cases of (\ref{fina-rep}), including the
normal-exponential and normal-gamma distributions. The author also presented some applications of this class of distributions in capital pricing, returns on financial assets and portfolio selections.

\par The univariate and multivariate skew-normal distributions were introduced in \cite{azzalini1985class} and \cite{azzalini1996multivariate}. An $n$-dimensional random vector $\boldsymbol{Y}$ is said to follow multivariate skew-normal distribution if it has stochastic representation
\begin{equation} \label{SN-rep}
\boldsymbol{Y} \overset{d}{=} \boldsymbol{\mu} + \boldsymbol{\sigma} \left( \boldsymbol{\delta Z} + \boldsymbol{X} \right),
\end{equation}
where $\boldsymbol{\delta} = \left( \delta_1,\delta_2, \dots, \delta_n\right)^T$, $-1 < \delta_i<1$ for all $1 \leq i \leq n$, $\boldsymbol{X} \sim N_n\left(0,\overline{\boldsymbol{\Sigma}} - \boldsymbol{\delta}\boldsymbol{\delta}^T\right)$, and the random quantity $\boldsymbol{Z}$, independent of $\boldsymbol{X}$, has a standard normal distribution within the truncated interval $(0, +\infty)$. Here, the square matrix $\boldsymbol{\sigma}$ is a diagonal matrix formed by $\boldsymbol{\sigma} = \left( \boldsymbol{\Sigma} \odot \mathbf{I}_n\right)^{1/2}$, where $\odot$ stands for the Hadamard product. \cite{Azzalini2005} illustrated various areas of application of skew-normal distribution, including selective sampling, models for compositional data, robust methods and non-linear time series. Recently, a general new family of the mixture of multivariate normal distributions was introduced by \cite{Negarestani2019} and \cite{madadi2020family} based on arbitrary random variable $\boldsymbol{Z}$ in (\ref{SN-rep}). In addition to the aforementioned approaches to model skewed data, \cite{Branco2001}, \cite{Arnold2002}, \cite{Wang2004} and \cite{Dey2005} extended elliptical distributions to skew-elliptical distributions under different perspectives. These extensions are mainly based on the density functions, correlation of random vectors and conditional representations.

\par Inspired by the stochastic representations (\ref{NMVM-rep}), (\ref{GHSS-rep}), (\ref{fina-rep}) and (\ref{SN-rep}), we propose a unified method to introduce the so-called class of generalized location-scale mixture of elliptical (GLSE) distributions, which takes skewness into consideration and gives a mathematically tractable extension of the multivariate elliptical distribution. Aforementioned four classes of distributions are special cases of GLSE distributions. Furthermore, we derive some sufficient and necessary conditions for various integral stochastic orderings of random vectors following the GLSE distributions, where we shall apply some common technical tricks used in \cite{muller2001stochastic}, \cite{yin2019stochastic} and \cite{jamali2020comparison} to the GLSE distributions.

%The methods for stochastic comparison in papers like have many technical tricks in common, the main purpose of this paper is to apply these methods to a more generalized class of distributions.

\par The rest of the paper is organized as follows. In Section \ref{preliminaries}, we review multivariate elliptical distribution and recall some key properties and characterizations. We also present a brief review of various integral stochastic orderings. In Section \ref{lse}, we introduce the class of GLSE distributions and present some related properties. Section \ref{results} establishes necessary and/or sufficient conditions for integral stochastic orderings and presents some actuarial applications. Some applications on stochastic ordering results  of important quantities for individual and collective risk models are provided in Section \ref{Applications}. Section \ref{remarks} concludes with a short discussion and some possible directions for future research.
%%%%%%%%%%%%%%%%%%%%%%%%%%%%%%%%%%%%%%%%%%%%%%%%%%%%%%%%%%%%%%%%%%%%%%%%%%%%%%%%%%%%%%%%%%%%%%%%%%%%%%%%%%%%
%%%%%%%%%%%%%%%%%%%%%%%%%%%%%%%%%%%%%%%%%%%%%%%%%%%%%%%%%%%%%%%%%%%%%%%%%%%%%%%%%%%%%%%%%%%%%%%%%%%%%%%%%%%%
%%%%%%%%%%%%%%%%%%%%%%%%%%%%%%%%%%%%%%%%%%%%%%%%%%%%%%%%%%%%%%%%%%%%%%%%%%%%%%%%%%%%%%%%%%%%%%%%%%%%%%%%%%%%
%%%%%%%%%%%%%%%%%%%%%%%%%%%%%%%%%%%%%%%%%%%%%%%%%%%%%%%%%%%%%%%%%%%%%%%%%%%%%%%%%%%%%%%%%%%%%%%%%%%%%%%%%%%%
\section{Preliminaries}\label{preliminaries}
We will use lowercase letters, bold lowercase letters, bold capital letters and bold Italic letters to denote numbers, vectors, matrices and random vectors respectively. Let $\Phi \left( \cdot \right)$ and $\phi \left( \cdot \right)$ denote the cumulative distribution function (CDF) and probability density function (PDF) of the univariate standard normal distribution, respectively, and $\Phi_n \left( \cdot ;\boldsymbol{\mu}, \boldsymbol{\Sigma}\right)$ and $\phi_n \left( \cdot ;\boldsymbol{\mu}, \boldsymbol{\Sigma} \right)$ denote the cumulative distribution function and probability density function of $n$-dimensional normal distribution with mean vector $\boldsymbol{\mu}$ and covariance matrix $\boldsymbol{\Sigma}$.

\par For twice continuously differentiable function $f: \mathbb{R}^n \to \mathbb{R}$, we use
\begin{equation} \nonumber
\nabla f(\mathbf{x}) = \left( \frac{\partial}{\partial \mathbf{x}_i} f(\mathbf{x}) \right)^n_{i=1}, \quad H_f(\mathbf{x}) = \left( \frac{\partial^2}{\partial \mathbf{x}_i \partial \mathbf{x}_j } f(\mathbf{x}) \right)^n_{i,j=1}
\end{equation}
to denote the gradient vector and the Hessian matrix of $f$, respectively. We use ${\rm tr}(\mathbf{C})$ to denote the trace of square matrix $\mathbf{C}$. For $n$-dimensional vectors $\boldsymbol{a}$ and $\boldsymbol{b}$, their inner product is denoted as $\langle \boldsymbol{a}, \boldsymbol{b}\rangle = \boldsymbol{a}^T\boldsymbol{b}$. For $n \times p$-dimensional matrices $\mathbf{A}$ and $\mathbf{B}$, their inner product is expressed as $\langle \mathbf{A}, \mathbf{B}\rangle = {\rm tr}\left(\mathbf{A}^T\mathbf{B}\right)$. Throughout this paper, the inequality between vectors or matrices denotes componentwise inequalities. All integrals and expectations are implicitly assumed to exist whenever they appear.

\subsection{Elliptical distributions}
The class of multivariate elliptical distributions is a natural extension to the class of multivariate normal distributions \citep[cf.][]{Fang1990}. An $n$-dimensional random vector $\boldsymbol{X}$ is said to have an elliptical distribution with location parameter $\boldsymbol{\mu}$, scale parameter $\boldsymbol{\Sigma}$ and characteristic generator $\psi$ (denoted by ${\rm ELL}_n\left( \boldsymbol{\mu},\boldsymbol{\Sigma},\psi\right)$) if its characteristic function has the form
\begin{equation} \label{ell-chara}
\Psi_X(t)={\rm exp}\left( i\boldsymbol{t}^T \boldsymbol{\mu}\right) \psi \left( \boldsymbol{t}^T \boldsymbol{\Sigma} \boldsymbol{t}\right),
\end{equation}
where $\psi$ satisfies $\psi(0) = 1$. If $\boldsymbol{X}$ has a density function,
then the density has the form
\begin{equation} \nonumber
f_{\boldsymbol{X}}(\boldsymbol{x})=\frac{c}{\sqrt{\lvert \boldsymbol{\Sigma} \rvert }} g\left( (\boldsymbol{x}-\boldsymbol{\mu})^T \boldsymbol{\Sigma}^{-1} (\boldsymbol{x}-\boldsymbol{\mu})\right),
\end{equation}
where
\begin{equation} \label{cons_cp}
c= \frac{\Gamma(n/2)}{\pi^{n/2}}\left( \int_0^\infty z^{n/2-1}g(z)dz \right)^{-1}
\end{equation}
is called the normalizing constant and $g$ is called the density generator. Note that for a given characteristic generator $\psi$, the density generator and/or the normalizing constant may depend on the dimension of the random vector $\boldsymbol{X}$. Often one considers the class of elliptical distributions of dimensions 1, 2, 3..., all derived from the same characteristic generator $\psi$. If these distributions have densities, we will denote their respective density generators and normalizing constants by $g_n$ and $c_n$, where the subscript $n$ denotes the dimension of the random vector $\boldsymbol{X}$.
One sometimes writes ${\rm ELL}_n\left( \boldsymbol{\mu},\boldsymbol{\Sigma},g_n\right)$ for the $n$-dimensional elliptical distributions generated from the function $g_n$. Some families of elliptical distributions with their density generators are presented in Table \ref{table1}.

%%%%%%%%%%%%%%table  %%%%%%%%%%%
\begin{table}
  \tbl{Some families of elliptical distributions with their density generators}
  {\begin{tabular}{ll}
    \hline
    Family& Density generator \\ \hline
    Cauchy& $g_n(u) = (1+u)^{-(n+1)/2}$\\
    Exponential power & $g_n(u) = {\rm exp}\left( -\frac{1}{s}(u)^{s/2}\right)$, $s>1$\\
    Laplace   & $g_n(u) = {\rm exp}\left( -\sqrt{u}\right)$ \\
    Normal& $g_n(u) = {\rm exp}\left( -u/2\right)$  \\
    Student   & $g_n(u) = \left(1+\frac{u}{m}\right)^{-(n+m)/2}$, $m$ is a positive integer \\
    Logistic  & $g_n(u) = {\rm exp}\left(-u\right)\left(1+{\rm exp}\left(-u\right)\right)^{-2} $ \\ \hline
    \end{tabular}}
  \label{table1}
  \end{table}
  %%%%%%%%%%%%%%table  %%%%%%%%%%%

\begin{lemma} \label{id-ell}
Let $\boldsymbol{X} \sim {\rm ELL}_n\left( \boldsymbol{\mu},\boldsymbol{\Sigma},\psi\right)$, then:
\begin{enumerate}
\item The mean vector $E(\boldsymbol{X})$ (if exists) coincides with the location vector and the covariance matrix
$Cov(\boldsymbol{X})$ (if exists), being $-2\psi'(0)\boldsymbol{\Sigma}$;
\item $\boldsymbol{X}$ admits the stochastic representation
\begin{equation}  \label{rep-ell}
\boldsymbol{X} \overset{d}{=} \boldsymbol{\mu} + {R} \mathbf{A}^T \boldsymbol{U}^{(n)},
\end{equation}
where $\mathbf{A}$ is a square matrix such that $\mathbf{A}^T\mathbf{A} = \boldsymbol{\Sigma}$,
$\boldsymbol{U}^{(n)}$ is uniformly distributed on the unit sphere $S^{n-1} = \{ \boldsymbol{u} \in \mathbb{R}^n \vert \boldsymbol{u}^T\boldsymbol{u}=1 \}$,
${R} \geq 0$ is the random variable with distribution function $F$ called the generating
variable and $F$ is called the generating distribution function, ${R}$ and $\boldsymbol{U}^{(n)}$ are independent.
\item Multivariate elliptical distribution is closed under affine transformations.
Considering $\boldsymbol{Y} = \mathbf{B}\boldsymbol{X} + \boldsymbol{b}$,  where $\mathbf{B}$ is a $m \times n$
matrix with $m < n$ and $rank(\mathbf{B}) = m$ and $\boldsymbol{b} \in \mathbb{R}^m$, then
$\boldsymbol{Y} \sim {\rm ELL}_m\left( \mathbf{B} \boldsymbol{\mu}+ \boldsymbol{b},\mathbf{B}\boldsymbol{\Sigma}\mathbf{B}^T,\psi\right)$.
\end{enumerate}
\end{lemma}
\cite{yin2019stochastic} provided an important identity for multivariate elliptical distributions as follows.
\begin{lemma} \label{ID-multi}
\citep{yin2019stochastic} Let $\boldsymbol{X} \sim {\rm ELL}_n\left( \boldsymbol{\mu}^x,\boldsymbol{\Sigma}^x,\psi\right)$
and $\boldsymbol{Y} \sim {\rm ELL}_n\left( \boldsymbol{\mu}^y,\boldsymbol{\Sigma}^y,\psi\right)$
with $\boldsymbol{\Sigma}^x$ and $\boldsymbol{\Sigma}^y$ positive definite.
Let $\phi_\lambda$ be the density function of
\begin{equation} \nonumber
{\rm ELL}_n\left( \lambda\boldsymbol{\mu}^y+\left(1-\lambda\right)\boldsymbol{\mu}^x,\lambda\boldsymbol{\Sigma}^y+\left(1-\lambda\right)\boldsymbol{\Sigma}^x,\psi\right), 0 \leq \lambda \leq 1,
\end{equation}
and $\phi_{1\lambda}$ be the density function of
\begin{equation} \nonumber
{\rm ELL}_n\left( \lambda\boldsymbol{\mu}^y+\left(1-\lambda\right)\boldsymbol{\mu}^x,\lambda\boldsymbol{\Sigma}^y+\left(1-\lambda\right)\boldsymbol{\Sigma}^x,\psi_1\right), 0 \leq \lambda \leq 1,
\end{equation}
where
\begin{equation} \nonumber
\psi_1\left( u\right) = \frac{1}{E\left( {R}^2 \right)} \int_0^{+\infty} {}_0F_1 \left(\frac{n}{2}+1;-\frac{r^2u}{4}\right) r^2 \mathbb{P}\left( {R} \in dr\right).
\end{equation}
Here
\begin{equation} \nonumber
{}_0F_1 \left(\gamma;z\right) = \sum_{k=0}^{\infty} \frac{\Gamma\left(\gamma\right)}{\Gamma\left(\gamma+k\right)} \frac{z^k}{k!}
\end{equation}
is the generalized hypergeometric series of order $(0, 1)$, ${R}$ is defined by (\ref{rep-ell}) with $E\left( {R}^2\right) < \infty$ and $\mathbb{P}(A)$ means the probability of the event $A$. Moreover, assume that $f: \mathbb{R}^n \to \mathbb{R}$ is twice continuously differentiable and satisfies some polynomial growth
conditions at infinity:
\begin{equation} \nonumber
f\left(\mathbf{x}\right) = O\left( \left\|\mathbf{x}\right\| \right), \nabla f\left(\mathbf{x}\right) = O\left( \left\|\mathbf{x}\right\| \right).
\end{equation}
Then,
\begin{equation} \nonumber
\begin{split}
E\left[ f\left(\boldsymbol{Y}\right) \right] - E\left[ f\left(\boldsymbol{X}\right) \right] =&\int_0^1 \int_{\mathbb{R}^n} \left( \boldsymbol{\mu}^y - \boldsymbol{\mu}^x\right)^T \nabla f\left(\mathbf{x}\right) \phi_\lambda\left(\mathbf{x}\right) d\mathbf{x}d\lambda \\
&+\frac{E\left( {R}^2\right)}{2n} \int_0^1 \int_{\mathbb{R}^n} {\rm tr}\left( \left( \boldsymbol{\Sigma}^y -\boldsymbol{\Sigma}^x\right) H_f\left(\mathbf{x}\right)\right) \phi_{1\lambda}\left(\mathbf{x}\right)d\mathbf{x}d\lambda.
\end{split}
\end{equation}
\end{lemma}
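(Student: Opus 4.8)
The plan is to carry the Gaussian interpolation argument of M\"uller \cite{muller2001stochastic} over to the elliptical setting, the one new ingredient being a ``heat-type equation'' for elliptical densities that involves the modified generator $\psi_1$. Introduce the one-parameter family
\begin{equation} \nonumber
\boldsymbol{W}_\lambda \sim E_n\!\left(\boldsymbol{\mu}_\lambda,\boldsymbol{\Sigma}_\lambda,\psi\right),\qquad \boldsymbol{\mu}_\lambda=\lambda\boldsymbol{\mu}^y+(1-\lambda)\boldsymbol{\mu}^x,\qquad \boldsymbol{\Sigma}_\lambda=\lambda\boldsymbol{\Sigma}^y+(1-\lambda)\boldsymbol{\Sigma}^x,
\end{equation}
so that $\boldsymbol{W}_0\overset{d}{=}\boldsymbol{X}$, $\boldsymbol{W}_1\overset{d}{=}\boldsymbol{Y}$ and $\phi_\lambda$ is the density of $\boldsymbol{W}_\lambda$. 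Writing $E[f(\boldsymbol{Y})]-E[f(\boldsymbol{X})]=\int_0^1\frac{d}{d\lambda}\int_{\mathbb{R}^n}f(\mathbf{x})\phi_\lambda(\mathbf{x})\,d\mathbf{x}\,d\lambda$ and differentiating under the integral sign, the problem reduces to evaluating $\partial\phi_\lambda/\partial\lambda$, which by the chain rule through $\boldsymbol{\mu}_\lambda$ and $\boldsymbol{\Sigma}_\lambda$ splits into a location contribution linear in $\boldsymbol{\mu}^y-\boldsymbol{\mu}^x$ and a scale contribution linear in $\boldsymbol{\Sigma}^y-\boldsymbol{\Sigma}^x$.

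The location contribution is immediate: since $\phi_\lambda(\mathbf{x})$ is a function of $\mathbf{x}-\boldsymbol{\mu}_\lambda$ alone, one has $\partial\phi_\lambda/\partial\mu_{\lambda,i}=-\partial\phi_\lambda/\partial x_i$, so multiplying by $f$ and integrating by parts once gives $\int_{\mathbb{R}^n}(\boldsymbol{\mu}^y-\boldsymbol{\mu}^x)^T\nabla f(\mathbf{x})\,\phi_\lambda(\mathbf{x})\,d\mathbf{x}$, the boundary term vanishing because $f(\mathbf{x})=O(\|\mathbf{x}\|)$.

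The crux is the scale contribution, where one needs an elliptical analogue of the Gaussian relation $\partial\phi_n/\partial\Sigma_{ij}=\tfrac12(1+\delta_{ij})\,\partial^2\phi_n/\partial x_i\partial x_j$. Passing to the Fourier side, the characteristic function of $\phi_\lambda$ is $e^{i\mathbf{t}^T\boldsymbol{\mu}_\lambda}\psi(\mathbf{t}^T\boldsymbol{\Sigma}_\lambda\mathbf{t})$, whose derivative in $\Sigma_{\lambda,ij}$ is $(1+\delta_{ij})t_it_j\,e^{i\mathbf{t}^T\boldsymbol{\mu}_\lambda}\psi'(\mathbf{t}^T\boldsymbol{\Sigma}_\lambda\mathbf{t})$. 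The key computation is the evaluation of $\psi'$: from the stochastic representation of Lemma \ref{id-ell} one gets $\psi(u)=E\!\left[{}_0F_1\!\left(n/2;-\boldsymbol{R}^2u/4\right)\right]$ (the characteristic function of $\boldsymbol{U}^{(n)}$ being $\mathbf{t}\mapsto{}_0F_1(n/2;-\mathbf{t}^T\mathbf{t}/4)$), and the series identity $\frac{d}{dz}{}_0F_1(\gamma;z)=\gamma^{-1}{}_0F_1(\gamma+1;z)$ then yields
\begin{equation} \nonumber
\psi'(u)=-\frac{1}{2n}\,E\!\left[\boldsymbol{R}^2\,{}_0F_1\!\left(\tfrac{n}{2}+1;-\tfrac{\boldsymbol{R}^2u}{4}\right)\right]=-\frac{E(\boldsymbol{R}^2)}{2n}\,\psi_1(u),
\end{equation}
which is exactly where the constant $E(\boldsymbol{R}^2)/(2p)$ (with $p$ the ambient dimension $n$) originates and why $E(\boldsymbol{R}^2)<\infty$ is assumed. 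Hence the derivative of the characteristic function in $\Sigma_{\lambda,ij}$ equals $-\frac{E(\boldsymbol{R}^2)}{2n}(1+\delta_{ij})t_it_j\,e^{i\mathbf{t}^T\boldsymbol{\mu}_\lambda}\psi_1(\mathbf{t}^T\boldsymbol{\Sigma}_\lambda\mathbf{t})$; inverting the Fourier transform (multiplication by $t_it_j$ becoming $-\partial^2/\partial x_i\partial x_j$) gives the elliptical heat equation $\partial\phi_\lambda/\partial\Sigma_{\lambda,ij}=\frac{E(\boldsymbol{R}^2)}{2n}(1+\delta_{ij})\,\partial^2\phi_{1\lambda}/\partial x_i\partial x_j$. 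Contracting against the symmetric increment $\boldsymbol{\Sigma}^y-\boldsymbol{\Sigma}^x$, multiplying by $f$ and integrating by parts twice then produces $\frac{E(\boldsymbol{R}^2)}{2p}\int_{\mathbb{R}^n}{\rm tr}\!\left((\boldsymbol{\Sigma}^y-\boldsymbol{\Sigma}^x)H_f(\mathbf{x})\right)\phi_{1\lambda}(\mathbf{x})\,d\mathbf{x}$, the boundary terms vanishing since $f(\mathbf{x})=O(\|\mathbf{x}\|)$ and $\nabla f(\mathbf{x})=O(\|\mathbf{x}\|)$. Adding the two contributions and integrating over $\lambda\in[0,1]$ gives the asserted identity.

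What remains is to dispose of the analytic technicalities: differentiation under the integral sign, the interchange of $\int_0^1$ with $\int_{\mathbb{R}^n}$, and the vanishing of all boundary terms. These follow from the polynomial growth bounds on $f$ and $\nabla f$ together with $E(\boldsymbol{R}^2)<\infty$, which makes $\phi_\lambda$ and $\phi_{1\lambda}$ have finite second moments bounded uniformly in $\lambda\in[0,1]$. One should also record that $\psi_1$ is a legitimate $n$-dimensional characteristic generator, so that $\phi_{1\lambda}$ is a bona fide density; this is the classical fact that ${}_0F_1(n/2+1;\cdot)$ is the spherical function of $S^{n+1}$, whose first $n$ coordinates equal a scalar multiple of $\boldsymbol{U}^{(n)}$ in distribution. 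I expect the only genuine obstacle to be the scale-part identity $\psi'=-\frac{E(\boldsymbol{R}^2)}{2n}\psi_1$ and the attendant Fourier-inversion and integration-by-parts bookkeeping; the rest is routine.
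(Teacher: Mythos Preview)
The paper does not prove this lemma at all: it is quoted verbatim from Yin \cite{yin2019stochastic} and used as a black box, so there is no ``paper's own proof'' to compare against. Your sketch is a faithful reconstruction of the argument in the cited source, which in turn is the elliptical version of M\"uller's Gaussian interpolation; the one genuinely new step is exactly the one you isolate, namely the identity $\psi'(u)=-\dfrac{E(\boldsymbol{R}^2)}{2n}\,\psi_1(u)$ obtained from $\psi(u)=E\bigl[{}_0F_1(n/2;-\boldsymbol{R}^2u/4)\bigr]$ and the derivative rule $\frac{d}{dz}{}_0F_1(\gamma;z)=\gamma^{-1}{}_0F_1(\gamma+1;z)$. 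Your Fourier-side computation and the two integrations by parts then deliver the stated formula with the correct constant $E(\boldsymbol{R}^2)/(2p)$ (the paper's $p$ is the ambient dimension $n$).

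Two small points worth tidying. First, your entrywise factor $(1+\delta_{ij})$ is not consistent with either of the standard conventions for differentiating $\mathbf{t}^T\boldsymbol{\Sigma}\mathbf{t}$ in a symmetric matrix entry; the cleanest route is to bypass entrywise derivatives entirely and differentiate the characteristic function directly in $\lambda$, obtaining the scalar factor $\mathbf{t}^T(\boldsymbol{\Sigma}^y-\boldsymbol{\Sigma}^x)\mathbf{t}$, which after Fourier inversion and integration by parts gives ${\rm tr}\bigl((\boldsymbol{\Sigma}^y-\boldsymbol{\Sigma}^x)H_f\bigr)$ without any bookkeeping of diagonal versus off-diagonal cases. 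Second, your justification that $\psi_1$ is a legitimate $n$-dimensional characteristic generator is correct but stated a bit cryptically: the point is that ${}_0F_1(n/2+1;-\cdot/4)=\Omega_{n+2}$ is the spherical characteristic generator in dimension $n+2$, and every $(n+2)$-dimensional generator is a fortiori an $n$-dimensional one by marginalisation, so the $r^2$-weighted mixture $\psi_1$ is indeed admissible. With those cosmetic fixes your argument is complete.
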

\subsection{Integral Stochastic Orders}
Given two $n$-dimensional random vectors $\boldsymbol{X}$ and $\boldsymbol{Y}$, integral stochastic orders define orderings between $\boldsymbol{X}$ and $\boldsymbol{Y}$ by comparing $Ef\left( \boldsymbol{Y}\right)$ and $Ef\left( \boldsymbol{X}\right)$.  Let $\boldsymbol{F}$ be a class of measurable functions $f: \mathbb{R}^{n} \to \mathbb{R}$. Then, we say that $\boldsymbol{X} \le_F \boldsymbol{Y}$ if $Ef\left( \boldsymbol{X} \right) \le Ef\left( \boldsymbol{Y} \right)$ holds for all $f \in \boldsymbol{F}$. A general study on integral stochastic orders has been given by \cite{muller1997stochastic}.
\begin{definition}
For any function $f: \mathbb{R}^{p} \to \mathbb{R}$, the difference operator $\boldsymbol{\Delta}_i^\epsilon$, $1\leq i \leq p$, $\epsilon > 0$ is defined as $\boldsymbol{\Delta}_i^\epsilon f(\mathbf{x}) = f(\mathbf{x}+ \epsilon \mathbf{e}_i) - f(\mathbf{x})$, where $\mathbf{e}_i$ stands for the $i$-th unit basis vector of $\mathbb{R}^n$. Then
\begin{enumerate}
\item $f$ is supermodular if $\boldsymbol{\Delta}_i^{\epsilon_1}\boldsymbol{\Delta}_j^{\epsilon_2}f(\mathbf{x}) \geq 0$ holds for all $\mathbf{x} \in \mathbb{R}^n$, $\epsilon_1, \epsilon_2\geq 0$ and $1 \leq i < j \leq n$;
\item $f$ is directionally convex if $\boldsymbol{\Delta}_i^{\epsilon_1}\boldsymbol{\Delta}_j^{\epsilon_2}f(\mathbf{x}) \geq 0$ holds for all $\mathbf{x} \in \mathbb{R}^n$, $\epsilon_1, \epsilon_2\geq 0$ and $1 \leq i, j \leq n$;
\item $f$ is $\boldsymbol{\Delta}$-monotone if $\boldsymbol{\Delta}_{i_1}^{\epsilon_1}\boldsymbol{\Delta}_{i_2}^{\epsilon_2} \dots {\Delta}_{i_k}^{\epsilon_k}f(\mathbf{x}) \geq 0$ holds for all $\mathbf{x} \in \mathbb{R}^n$, $\epsilon_i\geq 0$ for $1 \geq i \geq k$ and for any subset $\{i_1,i_2, \dots,i_k\} \subseteq \{1,2,\dots,n\}$.
\end{enumerate}
\end{definition}
\begin{remark}
These three classes of functions can be characterized by their derivitives:
\begin{enumerate}
\item $f$ is supermodular if and only if $\frac{\partial^2}{\partial x_i \partial x_j} f(\mathbf{x}) \geq 0$ holds for all $\mathbf{x} \in \mathbb{R}^n$ and $1 \leq i < j \leq n$;
\item $f$ is directionally convex if and only if $\frac{\partial^2}{\partial x_i \partial x_j} f(\mathbf{x}) \geq 0$ holds for all $\mathbf{x} \in \mathbb{R}^n$ and $1 \leq i, j \leq n$.
\item $f$ is $\boldsymbol{\Delta}$-monotone if and only if $\frac{\partial^k}{\partial x_{i_1} \dots \partial x_{i_k}}f(\mathbf{x}) \geq 0$ holds for all $\mathbf{x} \in \mathbb{R}^n$, $1 \leq k \leq n$ and $1 \leq i_1 < \dots < i_k \leq n$.
\end{enumerate}
\end{remark}
\begin{definition}
\citep{arlotto2009hessian} An $n \times n$ matrix $\mathbf{A}$ is called copositive if the quadratic form $\mathbf{x^TAx} \geq 0$ for all $\mathbf{x} \geq 0$, and $\mathbf{A}$ is called completely positive if there exists a nonnegative $m \times n$ matrix $\mathbf{B}$ such that $\mathbf{A} = \mathbf{B}^T\mathbf{B}$.
\end{definition}
We use $\mathcal{C}_{cop}$ to denote the cone of copositive matrices and $\mathcal{C}_{cp}$ to denote the cone of completely positive matrices. We use $\mathcal{C}^*$ to denote the dual of the closed convex cone $\mathcal{C}$, i.e. $\mathcal{C}^* = \{ \mathbf{B}: {\rm tr}(\mathbf{A}^T\mathbf{B}) \geq 0 , \forall \mathbf{A} \in \mathcal{C} \}$. \cite{arlotto2009hessian} proved that the cones of $\mathcal{C}_{cop}$ and $\mathcal{C}_{cp}$ are both closed and convex, and
\begin{equation} \label{rela-cpcop}
\mathcal{C}_{cop}^* = \mathcal{C}_{cp}, ~~\mathcal{C}_{cp}^* =\mathcal{C}_{cop}.
\end{equation}

\begin{definition} \label{def-order}
We say $\boldsymbol{X}$ is smaller than $\boldsymbol{Y}$ in the:
\begin{enumerate}
\item Usual stochastic order, i.e. $\boldsymbol{X} \le_{st} \boldsymbol{Y}$, if $Ef\left(\boldsymbol{X}\right) \le Ef\left(\boldsymbol{Y}\right)$ for all increasing functions;
\item Positive linear usual stochastic order, i.e. $\boldsymbol{X} \le_{plst} \boldsymbol{Y}$, if $ \langle \boldsymbol{a}, \boldsymbol{X} \rangle \le_{st} \langle \boldsymbol{a}, \boldsymbol{Y} \rangle$ for all $\boldsymbol{a} \in \mathbb{R}_+^n$;
\item Convex order, i.e. $\boldsymbol{X} \le_{cx} \boldsymbol{Y}$, if $Ef\left(\boldsymbol{X}\right) \le Ef\left(\boldsymbol{Y}\right)$ for all convex functions;
\item Linear convex order, i.e. $\boldsymbol{X} \le_{lcx} \boldsymbol{Y}$, if $\langle \boldsymbol{a}, \boldsymbol{X} \rangle \le_{cx} \langle \boldsymbol{a}, \boldsymbol{Y} \rangle$ for all $\boldsymbol{a} \in \mathbb{R}^n$;
\item Increasing convex order (stop-loss order), i.e. $\boldsymbol{X} \le_{icx}(\le_{sl}) \boldsymbol{Y}$, if $Ef\left(\boldsymbol{X}\right) \le Ef\left(\boldsymbol{Y}\right)$ for all increasing convex functions;
\item Increasing linear convex order, i.e. $\boldsymbol{X} \leq_{ilcx} \boldsymbol{Y}$, if $\langle \boldsymbol{a}, \boldsymbol{X} \rangle \le_{icx} \langle \boldsymbol{a}, \boldsymbol{Y} \rangle$ for all $\boldsymbol{a} \in \mathbb{R}^n$;
\item Increasing positive linear convex order, i.e. $\boldsymbol{X} \le_{iplcx} \boldsymbol{Y}$, if $\langle \boldsymbol{a}, \boldsymbol{X} \rangle \le_{icx} \langle \boldsymbol{a}, \boldsymbol{Y} \rangle$ for all $\boldsymbol{a} \in \mathbb{R}_+^n$;
\item Directionally convex order, i.e. $\boldsymbol{X} \le_{dcx} \boldsymbol{Y}$, if $Ef\left(\boldsymbol{X}\right) \le Ef\left(\boldsymbol{Y}\right)$ for all directionally convex functions;
\item Componentwise convex order, i.e. $\boldsymbol{X} \le_{ccx} \boldsymbol{Y}$, if $Ef\left(\boldsymbol{X}\right) \le Ef\left(\boldsymbol{Y}\right)$ for all componentwise convex functions;
\item Upper orthant order, i.e. $\boldsymbol{X} \le_{uo} \boldsymbol{Y}$, if $Ef\left(\boldsymbol{X}\right) \le Ef\left(\boldsymbol{Y}\right)$ for all $\boldsymbol{\Delta}$-monotone functions;
\item Supermodular order, i.e. $\boldsymbol{X} \le_{sm} \boldsymbol{Y}$, if $Ef\left(\boldsymbol{X}\right) \le Ef\left(\boldsymbol{Y}\right)$ for all supermodular functions;
\item Copositive order, i.e. $\boldsymbol{X} \le_{cp} \boldsymbol{Y}$, if $Ef\left(\boldsymbol{X}\right) \le Ef\left(\boldsymbol{Y}\right)$ for all twice differentiable functions such that $\mathbf{H}_f(\mathbf{x}) \in \mathcal{C}_{cp}$;
\item Completely positive order, i.e. $\boldsymbol{X} \le_{cop} \boldsymbol{Y}$, if $Ef\left(\boldsymbol{X}\right) \le Ef\left(\boldsymbol{Y}\right)$ for all twice differentiable functions such that $\mathbf{H}_f(\mathbf{x}) \in \mathcal{C}_{cop}$.
\end{enumerate}
\end{definition}
%\begin{remark}
\citet{denuit2002smooth} points out that many integral stochastic orders, including the first ten orders in Definition \ref{def-order}, have a generator consisting of infinitely differentiable functions. Taking the usual stochastic order as an example, if $Ef\left(\boldsymbol{X}\right) \le Ef\left(\boldsymbol{Y}\right)$ for all infinitely differentiable increasing functions $f$, it is sufficient to say $\boldsymbol{X} \leq_{st} \boldsymbol{Y}$.
%\end{remark}

The existence of the following chain of implications among the aforementioned stochastic orderings is well known \citep[c.f.][]{Scarsini1998,pan2016stochastic,Amiri2020Linear}:
\begin{equation} \label{im-chain}
\begin{split}
\boldsymbol{X} &\le_{cx} \boldsymbol{Y} \Rightarrow \boldsymbol{X} \le_{lcx} \boldsymbol{Y} \Leftrightarrow \boldsymbol{X} \le_{ilcx} \boldsymbol{Y} \\
& \Downarrow \\
\boldsymbol{X} \le_{plst} \boldsymbol{Y} \Leftarrow \boldsymbol{X} \le_{st} \boldsymbol{Y} \Rightarrow \boldsymbol{X} &\le_{icx} \boldsymbol{Y} \Rightarrow \boldsymbol{X} \le_{iplcx} \boldsymbol{Y} .
\end{split}
\end{equation}

%%%%%%%%%%%%%%%%%%%%%%%%%%%%%%%%%%%%%%%%%%%%%%%%%%%%%%%%%%%%%%%%%%%%%%%%%%%%%%%%%%%%%%%%%%%%%%%%%%%%%%%%%%%%

%%%%%%%%%%%%%%%%%%%%%%%%%%%%%%%%%%%%%%%%%%%%%%%%%%%%%%%%%%%%%%%%%%%%%%%%%%%%%%%%%%%%%%%%%%%%%%%%%%%%%%%%%%%%
%%%%%%%%%%%%%%%%%%%%%%%%%%%%%%%%%%%%%%%%%%%%%%%%%%%%%%%%%%%%%%%%%%%%%%%%%%%%%%%%%%%%%%%%%%%%%%%%%%%%%%%%%%%%
%%%%%%%%%%%%%%%%%%%%%%%%%%%%%%%%%%%%%%%%%%%%%%%%%%%%%%%%%%%%%%%%%%%%%%%%%%%%%%%%%%%%%%%%%%%%%%%%%%%%%%%%%%%%
\section{Generalized Location-Scale Mixture of Elliptical Distributions} \label{lse}
\par Mixtures of distributions occur frequently both in theory and applications of probability and statistics. For example, in the simplest case it may be reasonable to assume that one is dealing with the given proportion of normal populations with different means and/or variances. Mixtures of distributions can be used as a method to describe how external factors, which may not exert their influence on samples equally, influence the original distribution.
\par Consider the $n$-dimensional random vector $\boldsymbol{Y}$ that can be expressed as
\begin{equation} \label{lse-rep}
\boldsymbol{Y} \overset{d}{=} \boldsymbol{\mu} + \alpha(\boldsymbol{Z}) \boldsymbol{X} + \beta(\boldsymbol{Z})\boldsymbol{\delta},
\end{equation}
where $\boldsymbol{\mu},\boldsymbol{\delta} \in \mathbb{R}^n$, $\alpha,\beta: \mathbb{R}^q \to \mathbb{R}_+$, $\boldsymbol{X} \sim {\rm ELL}_n\left(\boldsymbol{0},\boldsymbol{\Sigma},\psi\right)$ with a positive definite matrix $\boldsymbol{\Sigma}$ and it has density generator $g_n$, and $\boldsymbol{Z}$ is a $q$-dimensional random vector with CDF $H^q(\mathbf{z})$ and independent of $\boldsymbol{X}$. Then, the random vector $\boldsymbol{Y}$ is said to have a generalized location-scale mixture of elliptical distributions, which will be denoted by ${\rm GLSE}_n(\boldsymbol{\mu},\boldsymbol{\Sigma},\boldsymbol{\delta},\psi,\alpha,\beta,H^q)$. Here, $\boldsymbol{\mu}$, $\boldsymbol{\Sigma}$ and $\boldsymbol{\delta}$ are the vectors of location parameters, scale parameters and skewness parameters of this distribution, respectively. The conditional representation of $\boldsymbol{Y}$ can be expressed as
\begin{equation} \label{condi-rep}
\boldsymbol{Y} | \boldsymbol{Z} \sim {\rm ELL}_n\left( \boldsymbol{\mu} + \beta(\boldsymbol{Z})\boldsymbol{\delta}, \alpha^2(\boldsymbol{Z}) \boldsymbol{\Sigma},\psi \right).
\end{equation}
Therefore, the density and characteristic functions of $\boldsymbol{Y}$ take the forms
\begin{equation} \label{lse-pdf}
f(\mathbf{y}) = \int_{\mathbb{R}^q} \frac{c_n}{\alpha(\mathbf{z}) \sqrt{|\boldsymbol{\Sigma}|}} g_n\left(
\frac{1}{\alpha^2(\mathbf{z})}(\mathbf{y}-\boldsymbol{\mu} - \beta(\mathbf{z})\boldsymbol{\delta})^T \boldsymbol{\Sigma}^{-1}(\mathbf{y}-\boldsymbol{\mu} - \beta(\mathbf{z})\boldsymbol{\delta})\right) dH^q(\mathbf{z}),
\end{equation}
where $c_n$ follows (\ref{cons_cp}) and
\begin{equation} \nonumber %\label{lse-chara}
\Psi(\mathbf{t}) = {\rm exp}(i\mathbf{t}^T \boldsymbol{\mu}) E_{\boldsymbol{Z}}\left( {\rm exp}\left( i \beta(\boldsymbol{Z}) \mathbf{t}^T \boldsymbol{\delta} \right) \psi\left( \alpha^2(\boldsymbol{Z}) \mathbf{t}^T \boldsymbol{\Sigma}\mathbf{t}\right)\right).
\end{equation}
Provided that $E\left( \alpha^2(\boldsymbol{Z})\right)$, $E\left(\beta(\boldsymbol{Z})\right)$, $Var\left(\beta(\boldsymbol{Z})\right)$, the mean vector and the covariance matrix of $\boldsymbol{Y}$ exist, then the mean vector and the covariance matrix of $\boldsymbol{Y}$ are given by
\begin{equation} \nonumber %\label{lse-mean}
E\left(\boldsymbol{Y}\right) = \boldsymbol{\mu} + E\left(\beta(\boldsymbol{Z})\right)\boldsymbol{\delta},
\end{equation}
and
\begin{equation} \label{lse-cov}
Cov(\boldsymbol{Y}) = -2\psi'(0) E\left( \alpha^2(\boldsymbol{Z})\right) \boldsymbol{\Sigma} + Var\left(\beta(\boldsymbol{Z})\right) \boldsymbol{\delta}\boldsymbol{\delta}^T.
\end{equation}

The following lemma shows that the GLSE distribution is closed under affine transformations.
\begin{lemma} \label{lse-aff}
Let $\boldsymbol{Y} \sim {\rm GLSE}_n(\boldsymbol{\mu},\boldsymbol{\Sigma},\boldsymbol{\delta},\psi,\alpha,\beta,H)$, and $\mathbf{B}$ be a $m \times n$ matrix with $m < n$ and $rank(\mathbf{B}) = m$ and $\boldsymbol{b} \in \mathbb{R}^m$, then $\mathbf{B}\boldsymbol{Y}+\boldsymbol{b} \sim {\rm GLSE}_m(\mathbf{B}\boldsymbol{\mu}+\boldsymbol{b},\mathbf{B}\boldsymbol{\Sigma}\mathbf{B}^T,\mathbf{B}\boldsymbol{\delta},\psi,\alpha,\beta,H)$.
\end{lemma}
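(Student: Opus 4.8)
The plan is to read off the claim from the stochastic representation (\ref{lse-rep}) together with the affine-closure of elliptical laws recorded in Lemma \ref{id-ell}(3). Write $\boldsymbol{Y} \overset{d}{=} \boldsymbol{\mu} + \alpha(\boldsymbol{Z})\boldsymbol{X} + \beta(\boldsymbol{Z})\boldsymbol{\delta}$ with $\boldsymbol{X} \sim ELL_n(\boldsymbol{0},\boldsymbol{\Sigma},\psi)$, $\boldsymbol{Z}$ having CDF $H$ and independent of $\boldsymbol{X}$. Applying the affine map $\mathbf{y}\mapsto\mathbf{B}\mathbf{y}+\boldsymbol{b}$ and using linearity,
\begin{equation}\nonumber
\mathbf{B}\boldsymbol{Y}+\boldsymbol{b}\;\overset{d}{=}\;(\mathbf{B}\boldsymbol{\mu}+\boldsymbol{b})+\alpha(\boldsymbol{Z})\,\mathbf{B}\boldsymbol{X}+\beta(\boldsymbol{Z})\,\mathbf{B}\boldsymbol{\delta}.
\end{equation}

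First I would identify the law of $\mathbf{B}\boldsymbol{X}$: since $\mathbf{B}$ is $m\times n$ with $m<n$ and $rank(\mathbf{B})=m$, Lemma \ref{id-ell}(3) gives $\mathbf{B}\boldsymbol{X}\sim ELL_m(\boldsymbol{0},\mathbf{B}\boldsymbol{\Sigma}\mathbf{B}',\psi)$, with the same characteristic generator $\psi$ (note $\mathbf{B}\boldsymbol{0}=\boldsymbol{0}$). Because $\mathbf{B}\boldsymbol{X}$ is a function of $\boldsymbol{X}$ alone, it stays independent of $\boldsymbol{Z}$, while $\boldsymbol{Z}$ keeps its CDF $H$ and the maps $\alpha,\beta$ are untouched. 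Hence the display above is exactly the stochastic representation (\ref{lse-rep}) of an $LSE_m(\mathbf{B}\boldsymbol{\mu}+\boldsymbol{b},\mathbf{B}\boldsymbol{\Sigma}\mathbf{B}',\mathbf{B}\boldsymbol{\delta},\psi,\alpha,\beta,H)$ vector, which is the assertion. It remains to check the one admissibility requirement hidden in the definition of the $LSE$ family, namely that the new scale matrix $\mathbf{B}\boldsymbol{\Sigma}\mathbf{B}'$ is positive definite: for $\mathbf{t}\in\mathbb{R}^m$, $\mathbf{t}\neq\boldsymbol{0}$, full row rank of $\mathbf{B}$ gives $\mathbf{B}'\mathbf{t}\neq\boldsymbol{0}$, so $\mathbf{t}'\mathbf{B}\boldsymbol{\Sigma}\mathbf{B}'\mathbf{t}=(\mathbf{B}'\mathbf{t})'\boldsymbol{\Sigma}(\mathbf{B}'\mathbf{t})>0$ by positive definiteness of $\boldsymbol{\Sigma}$.

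An equivalent route, which avoids the stochastic representation, is to compute directly with the characteristic function (\ref{lse-chara}): $\Psi_{\mathbf{B}\boldsymbol{Y}+\boldsymbol{b}}(\mathbf{t})=e^{i\mathbf{t}'\boldsymbol{b}}\,\Psi_{\boldsymbol{Y}}(\mathbf{B}'\mathbf{t})$, and substituting $\mathbf{B}'\mathbf{t}$ for $\mathbf{t}$ in (\ref{lse-chara}) turns $\mathbf{t}'\boldsymbol{\mu}$, $\mathbf{t}'\boldsymbol{\delta}$ and $\mathbf{t}'\boldsymbol{\Sigma}\mathbf{t}$ into $\mathbf{t}'\mathbf{B}\boldsymbol{\mu}$, $\mathbf{t}'\mathbf{B}\boldsymbol{\delta}$ and $\mathbf{t}'\mathbf{B}\boldsymbol{\Sigma}\mathbf{B}'\mathbf{t}$; after merging the prefactor $e^{i\mathbf{t}'\boldsymbol{b}}$ with $e^{i\mathbf{t}'\mathbf{B}\boldsymbol{\mu}}$ this is precisely the characteristic function of $LSE_m(\mathbf{B}\boldsymbol{\mu}+\boldsymbol{b},\mathbf{B}\boldsymbol{\Sigma}\mathbf{B}',\mathbf{B}\boldsymbol{\delta},\psi,\alpha,\beta,H)$. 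I expect no real obstacle here: the statement is a direct corollary of the structure of (\ref{lse-rep}) and of Lemma \ref{id-ell}(3), and the only points needing explicit justification are the nondegeneracy of $\mathbf{B}\boldsymbol{\Sigma}\mathbf{B}'$ and the bookkeeping that the mixing vector $\boldsymbol{Z}$ and the data $(\psi,\alpha,\beta,H)$ pass through the transformation unchanged.
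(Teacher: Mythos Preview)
Your proposal is correct. The paper's own proof is precisely your second route: it writes down the characteristic function of $\mathbf{B}\boldsymbol{Y}+\boldsymbol{b}$ via $\Psi_{\mathbf{B}\boldsymbol{Y}+\boldsymbol{b}}(\mathbf{t})=e^{i\mathbf{t}'\boldsymbol{b}}\Psi_{\boldsymbol{Y}}(\mathbf{B}'\mathbf{t})$ and reads off the parameters. Your primary route through the stochastic representation and Lemma~\ref{id-ell}(3) is an equally direct alternative; it has the small advantage that it makes explicit the check that $\mathbf{B}\boldsymbol{\Sigma}\mathbf{B}'$ remains positive definite (needed for the $LSE$ definition), which the paper leaves implicit.
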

\begin{proof}
The characteristic function of $\mathbf{B}\boldsymbol{Y}+\boldsymbol{b}$ is obtained as
\begin{equation} \nonumber
  \Psi_{\mathbf{B}\boldsymbol{Y}+\boldsymbol{b}}(\mathbf{t}) = {\rm exp}(i\mathbf{t}^T \mathbf{B}\boldsymbol{\mu}) {\rm exp}(i\mathbf{t}^T \boldsymbol{b}) E_Z\left( {\rm exp}\left( i \beta(\boldsymbol{Z}) \mathbf{t}^T \mathbf{B}\boldsymbol{\delta} \right) \psi\left( \alpha^2(\boldsymbol{Z}) \mathbf{t}^T \mathbf{B}\boldsymbol{\Sigma}\mathbf{B}^T\mathbf{t}\right)\right),
\end{equation}
which shows the result.
\end{proof}
The following theorem illustrates a peculiar property for GLSE distribution, that is, multivariate GLSE distributions are closed under skewness-parallel location-scale mixture.
\begin{theorem} \label{mix-close}
Let $\boldsymbol{Y} \sim {\rm GLSE}_n(\boldsymbol{0},\boldsymbol{\Sigma},\boldsymbol{\delta},\psi,\alpha_1,\beta_1,H_1)$ and $\boldsymbol{Z}_1$ be a $q$-dimensional random vector with CDF $H_1(\mathbf{z})$. Consider the location-scale mixture of $\boldsymbol{Y}$, i.e.
\begin{equation} \label{mixY-rp}
\hat{\boldsymbol{Y}} \overset{d}{=} \boldsymbol{\mu} + \alpha_2(\boldsymbol{Z}_2) \boldsymbol{Y} + \beta_2(\boldsymbol{Z}_2) \boldsymbol{\delta}_2,
\end{equation}
where $\boldsymbol{\mu} \in \mathbb{R}^n$, $\boldsymbol{\delta}_2 \in \mathbb{R}^n$ and there exists $k \in \mathbb{R}$ such that $\boldsymbol{\delta}_2 = k\boldsymbol{\delta}$, $\alpha_2,\beta_2: \mathbb{R}^p \to \mathbb{R}_+$ and $\boldsymbol{Z}_2$ is a $p$-dimensional random vector with CDF $H^p_2(\mathbf{z})$ and independent of $\boldsymbol{Y}$. Set $\boldsymbol{Z}^* = (\boldsymbol{Z}_1^T,\boldsymbol{Z}_2^T)^T$, $H^{p+q}_*(\mathbf{z})$ be the CDF of $\boldsymbol{Z}^*$, $\alpha^*(\boldsymbol{Z}^*) = \alpha_1(\boldsymbol{Z}_1)\alpha_2(\boldsymbol{Z}_2)$ and $\beta^*(\boldsymbol{Z}^*)= \beta_1(\boldsymbol{Z}_1)\alpha_2(\boldsymbol{Z}_2)+ k\beta_2(\boldsymbol{Z}_2)$. Then $\hat{\boldsymbol{Y}} \sim {\rm GLSE}_n (\boldsymbol{\mu},\boldsymbol{\Sigma},\boldsymbol{\delta},\psi,\alpha^*,\beta^*,H_*)$.
\end{theorem}
\begin{proof}
The characteristic function of $\hat{\boldsymbol{Y}}$ can be derived immediately from stochastic representation (\ref{mixY-rp}):
\begin{equation*}
\Psi_{\hat{\boldsymbol{Y}}} = {\rm exp}(i\mathbf{t}^T \boldsymbol{\mu}) E_{\boldsymbol{Z}^*}\left[ {\rm exp}\left( i (\beta_1(\boldsymbol{Z}_1)\alpha_2(\boldsymbol{Z}_2)+ k\beta_2(\boldsymbol{Z}_2)) \mathbf{t}^T \boldsymbol{\delta} \right) \psi\left( \alpha_1^2(\boldsymbol{Z}_1)\alpha_2^2(\boldsymbol{Z}_2) \mathbf{t}^T \boldsymbol{\Sigma}\mathbf{t}\right)\right],
\end{equation*}
then the desired result can be obtained.
\end{proof}
The one-dimensional case of Theorem \ref{mix-close} can be established in the following corollary that univariate GLSE distributions are closed under location-scale mixture.
\begin{corollary} \label{coll-mix}
Let $Y \sim {\rm GLSE}_1(0,\sigma,\delta,\psi,\alpha_1,\beta_1,H)$. Consider the location-scale mixture of $\boldsymbol{Y}$, i.e.
\begin{equation*}
\hat{Y} \overset{d}{=} {\mu} + \alpha_2(\boldsymbol{Z}_2) {Y} + \beta_2(\boldsymbol{Z}_2) \delta_2,
\end{equation*}
where ${\mu},{\delta}_2 \in \mathbb{R}$, $\alpha_2,\beta_2: \mathbb{R}^q \to \mathbb{R}_+$ and $\boldsymbol{Z}_2$ is a $q$-dimensional random vector with CDF $H_2(\mathbf{z})$ and independent of $\boldsymbol{Y}$. Set $\boldsymbol{Z}^* = (\boldsymbol{Z}_1^T,\boldsymbol{Z}_2^T)^T$, $H^*(\mathbf{z})$ be the CDF of $\boldsymbol{Z}^*$, $\alpha^*(\boldsymbol{Z}^*) = \alpha_1(\boldsymbol{Z}_1)\alpha_2(\boldsymbol{Z}_2)$, $\beta^*(\boldsymbol{Z}^*)= \beta_1(\boldsymbol{Z}_1)\alpha_2(\boldsymbol{Z}_2)+ k\beta_2(\boldsymbol{Z}_2)$ and $k=\delta_2/\delta$. Then $\hat{\boldsymbol{Y}} \sim {\rm GLSE}_n ({\mu},{\sigma},{\delta},\psi,\alpha^*,\beta^*,H^*)$.
\end{corollary}
The family of GLSE distributions is large enough to contain several subfamilies of symmetric and non-symmetric distributions. A considerable amount of well-known distributions can be seen as special cases of GLSE distributions, and we introduce some of them here.
\begin{enumerate}
\item Skew-normal distributions. Follow the notations in (\ref{SN-rep}), and let $\hat{\boldsymbol{\delta}} = \boldsymbol{\sigma \delta}$, then (\ref{SN-rep}) can be rewritten as
\begin{equation} \label{SN-rewrite}
\boldsymbol{Y} \overset{d}{=} \boldsymbol{\mu} + \hat{\boldsymbol{X}} + \boldsymbol{Z} \hat{\boldsymbol{\delta}},
\end{equation}
where $\hat{\boldsymbol{X}} \sim N_n(0,{\boldsymbol{\Sigma}} - \hat{\boldsymbol{\delta}}\hat{\boldsymbol{\delta}}^T)$. Once we set $q = 1$, $\alpha(z) = 1$, $\beta(z) = z$ in (\ref{lse-rep}), then the stochastic representation (\ref{SN-rewrite}) can be obtained. Furthermore, in light of Corollary \ref{coll-mix}, it can be claimed that the variance-mean mixture of the univariate skew normal distribution introduced by \cite{Arslan2015} can be seen special case of the GLSE distribution.
\item  Location-scale mixture of elliptical distributions \citep{zuo2021}: $q = 1$, $\alpha(z) = \sqrt{z}$, $\beta(z) = z$. It becomes mean-variance mixture of multinormal distributions  \citep{Barndorff1982Normal} when setting characteristic generator $\psi(u) = {\rm exp}\left( -u/2\right)$.
\item  Generalized hyperbolic skew-slash distribution \citep{Arslan2008}: $\psi(u) = {\rm exp}\left( -u/2\right)$, $q = 1$, $\boldsymbol{Z} \sim beta(\lambda,1)$, $\alpha(z) = z^{-\frac{1}{2}}$, $\beta(z) = z^{-1}$. This distribution will be denoted by ${\rm GHSS}_n(\boldsymbol{\mu},\boldsymbol{\Sigma},\boldsymbol{\delta},\lambda)$.
\item Generalized Hyperbolic distribution \citep{Barndorff1981}: $\psi(u) = {\rm exp}\left( -u/2\right)$, $q = 1$, $\alpha(z) = \sqrt{z}$, $\beta(z) = z$ and $\boldsymbol{Z}$ follows Generalized inverse Gussian distribution with density
\begin{equation} \nonumber
h(\omega) = \frac{(\tau / \chi)^{\frac{\lambda}{2}}}{2K_\lambda(\sqrt{\chi \tau })} \omega^{\lambda -1} {\rm exp}\left(- \frac{1}{2} \left(\frac{\chi}{\omega} + \tau \omega\right) \right), \omega > 0,
\end{equation}
where parameters follow
\begin{equation} \nonumber
    \left\{
    \begin{array}{lr}
    \chi > 0 \quad and \quad \tau \geq 0, & if \quad \lambda<0,  \\
    \chi > 0 \quad and \quad \tau > 0, & if \quad \lambda = 0, \\
    \chi \geq 0 \quad and \quad \tau > 0, & if \quad \lambda > 0
    \end{array}
    \right.
    \end{equation}
and $K_\lambda$ being the Bessel function of the third kind with index $\lambda$.
\item The model of nonsymmetric security returns (\cite{simaan1993}, defined by (\ref{fina-rep}) in this paper): $\alpha(z) = 1$, $\beta(z) = z$ and $\boldsymbol{Z}$ be a univariate random variable with any non-elliptical distribution.
\end{enumerate}
\par The following lemma provides an identity for GLSE distributions, which provides us an efficient way to prove some stochastic ordering results.
\begin{lemma} \label{id-lse}
Assume $\boldsymbol{Y}_1 \sim {\rm GLSE}_n\left(\boldsymbol{\mu}_1, \boldsymbol{\Sigma}_1, \boldsymbol{\delta}_1, \psi, \alpha, \beta, H\right)$ and $\boldsymbol{Y}_2 \sim {\rm GLSE}_n\left(\boldsymbol{\mu}_2, \boldsymbol{\Sigma}_2, \boldsymbol{\delta}_2, \psi, \alpha, \beta, H\right)$. If all the conditions in Lemma \ref{id-ell} are satisfied, then
\begin{equation} \label{GLSE-id}
\begin{split}
E\left[ f\left(\boldsymbol{Y}_1\right) \right] - E\left[ f\left(\boldsymbol{Y}_2\right) \right] =&\int_{\mathbb{R}^q} \int_0^1 \int_{\mathbb{R}^n} \left( \boldsymbol{\mu}_2 - \boldsymbol{\mu}_1 + \beta(\mathbf{z}) \left(\boldsymbol{\delta}_2 - \boldsymbol{\delta}_1 \right)\right)^T \nabla f\left(\mathbf{x}\right) \phi_\lambda\left(\mathbf{x}\right) d\mathbf{x}d\lambda dH^q(\mathbf{z}) \\
&+\frac{E\left( {R}^2\right)}{2n} \int_{\mathbb{R}^q} \int_0^1 \int_{\mathbb{R}^n} \alpha^2(\mathbf{z}) {\rm tr}\left( \left( \boldsymbol{\Sigma}_2 -\boldsymbol{\Sigma}_1\right) H_f\left(\mathbf{x}\right)\right) \phi_{1\lambda}\left(\mathbf{x}\right)d\mathbf{x}d\lambda dH^q(\mathbf{z}).
\end{split}
\end{equation}
\end{lemma}
\begin{proof}
Applying double expectation formula to the left-hand side of (\ref{GLSE-id}), we have
\begin{equation} \nonumber
E\left[ f\left(\boldsymbol{Y}_1\right) \right] - E\left[ f\left(\boldsymbol{Y}_2\right) \right]  = \int_{\mathbb{R}^q} E\left[ f\left(\boldsymbol{Y}_1\right) | \boldsymbol{Z} = \mathbf{z} \right] - E\left[ f\left(\boldsymbol{Y}_2\right) | \boldsymbol{Z} = \mathbf{z} \right] dH^q(\mathbf{z}).
\end{equation}
Then the desired result can be readily obtained by applying (\ref{condi-rep}) and Lemma \ref{ID-multi}.
\end{proof}
\par If one set $\boldsymbol{\delta} = 0$ in (\ref{lse-rep}), then the part of location mixture of GLSE distribution vanishes and it degenerates to scale mixture of elliptical distributions. In other words, the family of scale mixture of elliptical distributions is set up by stochastic representation $\boldsymbol{Y} \overset{d}{=} \boldsymbol{\mu} + \alpha(\boldsymbol{Z}) \boldsymbol{X}$, where the parameters are set in parallel with (\ref{lse-rep}). The randon vector $\boldsymbol{Y}$ will be denoted by ${\rm SME}(\boldsymbol{\mu},\boldsymbol{\Sigma},\psi,\alpha,H)$. Obviously, the identities presented in this section are still valid in SME case. Setting $\boldsymbol{\delta} = 0$, the PDF in (\ref{lse-pdf}) has the form
\begin{equation} \nonumber
f(\mathbf{y}) = \frac{c_n}{\alpha(\mathbf{z}) \sqrt{|\boldsymbol{\Sigma}|}} \overline{g}_n\left( (\mathbf{y} - \boldsymbol{\mu})^T \boldsymbol{\Sigma}^{-1} (\mathbf{y} - \boldsymbol{\mu})\right),
\end{equation}
where $\overline{g}_n(\mathbf{y}) = \int_{\mathbb{R}^q} g_n\left( \frac{1}{\alpha^2(z)}(\mathbf{y} - \boldsymbol{\mu})^T \boldsymbol{\Sigma}^{-1} (\mathbf{y} - \boldsymbol{\mu})\right) dH^q(\mathbf{z})$. It can be observed that the characteristic function of SME distributions is of the form (\ref{ell-chara}).
%%%%%%%%%%%%%%%%%%%%%%%%%%%%%%%%%%%%%%%%%%%%%%%%%%%%%%%%%%%%%%%%%%%%%%%%%%%%%%%%%%%%%%%%%%%%%%%%%%%%%%%%%%%%
%%%%%%%%%%%%%%%%%%%%%%%%%%%%%%%%%%%%%%%%%%%%%%%%%%%%%%%%%%%%%%%%%%%%%%%%%%%%%%%%%%%%%%%%%%%%%%%%%%%%%%%%%%%%
%%%%%%%%%%%%%%%%%%%%%%%%%%%%%%%%%%%%%%%%%%%%%%%%%%%%%%%%%%%%%%%%%%%%%%%%%%%%%%%%%%%%%%%%%%%%%%%%%%%%%%%%%%%%
%%%%%%%%%%%%%%%%%%%%%%%%%%%%%%%%%%%%%%%%%%%%%%%%%%%%%%%%%%%%%%%%%%%%%%%%%%%%%%%%%%%%%%%%%%%%%%%%%%%%%%%%%%%%
\section{Stochastic Ordering Results} \label{results}
In some cases, the density generators are arbitrarily chosen and thus too general to study the properties of GLSE distributions. We have to narrow down the variety of density generator $g$ under some specific situations. To this end, the following technical assumptions are necessarily needed.
\begin{assumption} \label{ass1}
Let $t_i = {t}/{\sigma_i}$, $\sigma_i > 0$ for $i =1,2$. We assume density generator $g$ satisfies for $\sigma_1 \neq \sigma_2$,
\begin{equation} \nonumber
\lim_{t \to +\infty} \frac{\sigma_1}{\sigma_2} \frac{g(t_2^2)}{g(t_1^2)} = \lim_{t \to -\infty} \frac{\sigma_1}{\sigma_2} \frac{g(t_2^2)}{g(t_1^2)} = C,
\end{equation}
where $C \in \overline{\mathbb{R}_+} \setminus \{ 1 \}$.
\end{assumption}
\begin{assumption} \label{ass2}
Let $t_i = {t}/{\sigma_i}$, $\sigma_i > 0$ for $i =1,2$.  We assume density generator $g$ satisfies for $\sigma_1 > \sigma_2$,
\begin{equation} \nonumber
\lim_{t \to +\infty} \frac{\sigma_1}{\sigma_2} \frac{g(t_2^2)}{g(t_1^2)} = \lim_{t \to -\infty} \frac{\sigma_1}{\sigma_2} \frac{g(t_2^2)}{g(t_1^2)} = C',
\end{equation}
where $C' \in [0,1)$.
\end{assumption}
These two technical assumptions are used to control tail behaviors of the density functions, based on which we can compare the GLSE variables by considering the limits of quotient of density functions at infinity in the sequel discussions. In Assumption \ref{ass1}, we suppose the tail behavior of the density functions are not identical when $\sigma_1$ and $\sigma_2$ are not equal. In Assumption \ref{ass2}, we suppose the limits of quotient of density functions at infinity is less than $1$ in order to ensure that $g(t^2/\sigma_2^2)/{\sigma_2} \leq g(t^2/\sigma_1^2)/{\sigma_1}$ as $t$ goes to infinity when $\sigma_1 > \sigma_2$.
\begin{remark} \label{ass-satis}
Note that Assumptions \ref{ass1} and \ref{ass2} are not strict and all the density generators presented in Table \ref{table1} follow Assumptions \ref{ass1} and \ref{ass2}. We will prove this statement in Appendix. This fact is needed for characterizing the limit behavior of the density generator.
\end{remark}
Based on the foregoing assumptions, the conditions for the usual stochastic order of univariate GLSE distribution can be established as follows.
\begin{lemma} \label{st-dim1}
Assume
\begin{equation} \label{GLSEdim1}
  \begin{split}
    Y_1 \sim & {\rm GLSE}_1\left(\mu_1, \sigma_1, \delta_1, \psi, \alpha, \beta, H\right),\\
    {Y}_2 \sim & {\rm GLSE}_1\left(\mu_2, \sigma_2, \delta_2, \psi, \alpha, \beta, H\right).
  \end{split}
  \end{equation}
\begin{enumerate}
\item If $\mu_2 - \mu_1 + \beta(\mathbf{z}) \left({\delta}_2 - {\delta}_1 \right) \geq 0$ for all $\mathbf{z}$ and ${\sigma}_1 = {\sigma}_2$, then ${Y}_1 \leq_{st} {Y}_2$.
\item If ${Y}_1 \leq_{st} {Y}_2$ and density generator $g_1$ satisfies Assumption \ref{ass1}, then $\mu_1+E\left(\beta(\mathbf{Z})\right){\delta}_1 \leq \mu_2+E\left(\beta(\mathbf{Z})\right){\delta}_2$ and ${\sigma}_1 = {\sigma}_2$.
\end{enumerate}
\end{lemma}
\begin{proof}
1. The implication follows Lemma \ref{id-lse}.
\par 2. If ${Y}_1 \leq_{st} {Y}_2$, then $E{Y}_1 \leq E{Y}_2$, obviously we have $\mu_1+E\left(\beta(\mathbf{Z})\right){\delta}_1 \leq \mu_2+E\left(\beta(\mathbf{Z})\right){\delta}_2$.
\par We claim ${\sigma}_1 = {\sigma}_2$. If ${\sigma}_1 \neq {\sigma}_2$, according to Assumption \ref{ass1}, we have
\begin{equation} \nonumber
\lim_{y \to \pm\infty} \frac{p_2(y,\mathbf{z})}{p_1(y,\mathbf{z})} = C,
\end{equation}
where
\begin{equation}  \nonumber
p_i(y,\mathbf{z}) = \frac{c_n}{\alpha(\mathbf{z}) \sigma_i} g_n\left(
  \frac{1}{\alpha^2(\mathbf{z})\sigma^2_i}(y-{\mu_i} - \beta(\mathbf{z}){\delta_i})^2\right),
\end{equation}
  and $C \in \overline{\mathbb{R}^+} \setminus \{ 1 \}$. If $C \in [0,1)$, then for sufficiently large positive $t$, $p_2(y,\mathbf{z}) < p_1(y,\mathbf{z})$, where $y>t$. Considering the CDF of ${Y}_1$ and ${Y}_2$, we have
\begin{equation} \nonumber
\overline{F}_2(t) = \int_{\mathbb{R}^q} \int_t^{+\infty} p_2(x,\mathbf{z}) dxdH^q(\mathbf{z}) < \int_{\mathbb{R}^q} \int_t^{+\infty} p_1(x,\mathbf{z}) dxdH^q(\mathbf{z}) = \overline{F}_1(t),
\end{equation}
which contradicts with ${Y}_1 \leq_{st} {Y}_2$. In parallel, if $C \in (1,+\infty]$, then for negative $t$ with sufficiently large $|t|$, $p_2(y,\mathbf{z}) > p_1(y,\mathbf{z})$, where $y<t$. So
\begin{equation}\nonumber
{F}_2(t) = \int_{\mathbb{R}^q} \int^t_{-\infty} p_2(x,\mathbf{z}) dxdH^q(\mathbf{z}) > \int_{\mathbb{R}^q} \int^t_{-\infty} p_1(x,\mathbf{z}) dxdH^q(\mathbf{z}) = {F}_1(t),
\end{equation}
which leads a contradiction to ${Y}_1 \leq_{st} {Y}_2$. Hence, we conclude ${\sigma}_1 = {\sigma}_2$.
\end{proof}
\begin{example}We consider the special case of univariate GLSE distribution to illustrate the results in Lemma \ref{st-dim1}. The survival functions of the GHSS distributions under three parameter setting are plotted in Figure \ref{fig:survival}(a). It is easy to check that the conditions in Lemma \ref{st-dim1}(1) are satisfied, and thus agrees with the usual stochastic ordering among these three distributions displayed in the plot. Moreover, Figure \ref{fig:survival}(b) provides a counterexample showing that the survival functions of these distributions cross with each other if the scale parameters of the three distributions are not identical.
\end{example}

\begin{figure}
  \centering
  \subfloat[]{%
  \resizebox*{7cm}{!}{\includegraphics{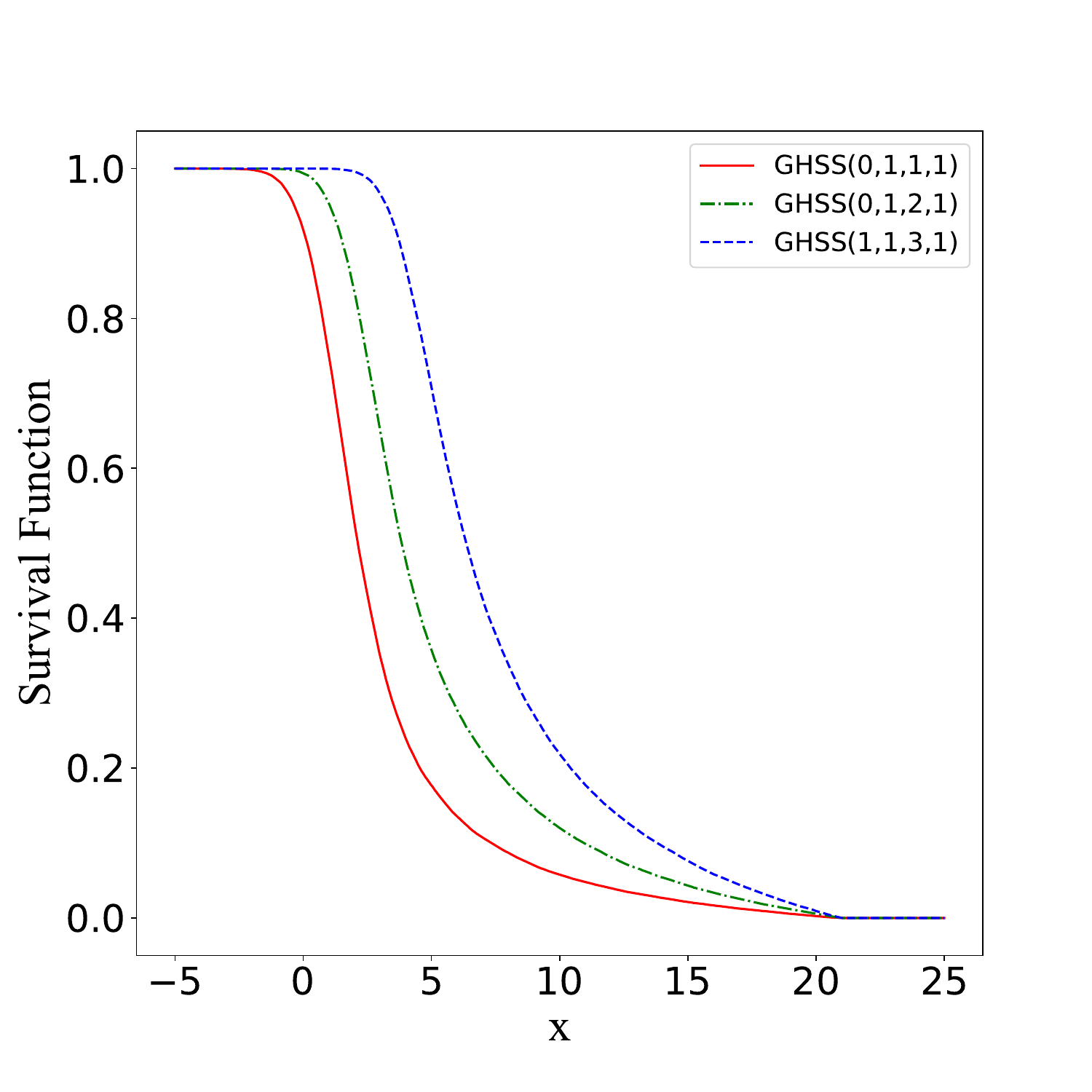}}}\hspace{5pt}
  \subfloat[]{%
  \resizebox*{7cm}{!}{\includegraphics{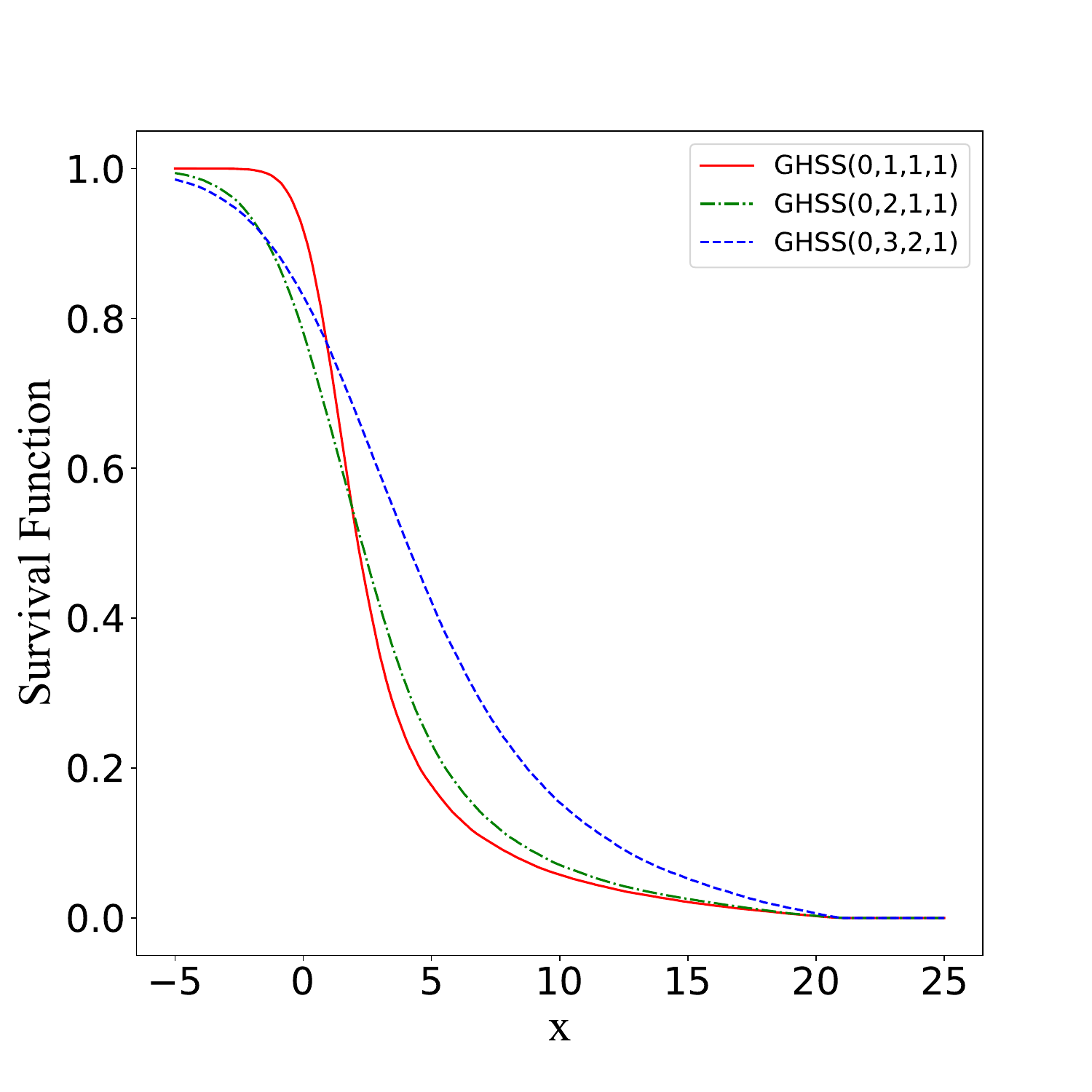}}}
  \caption{Survival functions of univariate GHSS distributions.} \label{fig:survival}
\end{figure}
%%%%%%%%%%%%%%%%%%%%%%%%%%%%%%%%%%%%%%%%%%%%%%%%%%%%%%%%%%%%%%%%%%%%%%%%%%%%%%%%%%%%%%%%%%%%%%%%%%%%%%%%%%%%
%%%%%%%%%%%%%%%%%%%%%%%%%%%%%%%%%%%%%%%%%%%%%%%%%%%%%%%%%%%%%%%%%%%%%%%%%%%%%%%%%%%%%%%%%%%%%%%%%%%%%%%%%%%%
The following theorem establishes sufficient and necessary conditions for two random vectors following GLSE distributions with different $\boldsymbol{\mu}$, $\boldsymbol{\Sigma}$, and $\boldsymbol{\delta}$.
\begin{theorem} \label{st-dimn}
Assume that
\begin{equation} \label{Y1Y2-assumption}
\begin{split}
&\boldsymbol{Y}_1 \sim {\rm GLSE}_n\left(\boldsymbol{\mu}_1, \boldsymbol{\Sigma}_1, \boldsymbol{\delta}_1, \psi, \alpha, \beta, H\right),   \\
&\boldsymbol{Y}_2 \sim {\rm GLSE}_n\left(\boldsymbol{\mu}_2, \boldsymbol{\Sigma}_2, \boldsymbol{\delta}_2, \psi, \alpha, \beta, H\right).
\end{split}
\end{equation}
\begin{enumerate}
\item If $\boldsymbol{\mu}_2 + \beta(\mathbf{z}) \boldsymbol{\delta}_2  \geq \boldsymbol{\mu}_1 + \beta(\mathbf{z}) \boldsymbol{\delta}_1$ for all $\mathbf{z}$ and $\boldsymbol{\Sigma}_1 = \boldsymbol{\Sigma}_2$, then $\boldsymbol{Y}_1 \leq_{st} \boldsymbol{Y}_2$.
\item If $\boldsymbol{Y}_1 \leq_{st} \boldsymbol{Y}_2$ and the corresponding density density generator $g_1$ for $\boldsymbol{a}^T\boldsymbol{Y}_1$ satisfies Assumption \ref{ass1} for all $\boldsymbol{a} \in \mathbb{R}^n$, then $\boldsymbol{\mu}_1+E\left(\beta(\mathbf{Z})\right)\boldsymbol{\delta}_1 \leq \boldsymbol{\mu}_2+E\left(\beta(\mathbf{Z})\right)\boldsymbol{\delta}_2$ and $\boldsymbol{\Sigma}_1 = \boldsymbol{\Sigma}_2$.
\end{enumerate}
\end{theorem}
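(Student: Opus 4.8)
The plan is to handle the two parts separately: part~1 (sufficiency) by conditioning on $\boldsymbol{Z}$ and reducing to a deterministic shift of elliptical vectors, and part~2 (necessity) by pushing $\le_{st}$ through nonnegative linear functionals and invoking the univariate Lemma~\ref{st-dim1}.

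For part~1, I would condition on $\boldsymbol{Z}$. By the conditional representation \eqref{condi-rep}, $\boldsymbol{Y}_i \mid \boldsymbol{Z}=\mathbf{z} \sim ELL_n(\boldsymbol{\mu}_i + \beta(\mathbf{z})\boldsymbol{\delta}_i,\ \alpha^2(\mathbf{z})\boldsymbol{\Sigma}_i,\ \psi)$ for $i=1,2$. Since $\boldsymbol{\Sigma}_1=\boldsymbol{\Sigma}_2$, the two conditional laws share the scale matrix $\alpha^2(\mathbf{z})\boldsymbol{\Sigma}$ and differ only by the deterministic shift $\mathbf{c}(\mathbf{z}) := (\boldsymbol{\mu}_2-\boldsymbol{\mu}_1)+\beta(\mathbf{z})(\boldsymbol{\delta}_2-\boldsymbol{\delta}_1)$, which is nonnegative by hypothesis; hence $\boldsymbol{Y}_2\mid\boldsymbol{Z}=\mathbf{z} \overset{d}{=} (\boldsymbol{Y}_1\mid\boldsymbol{Z}=\mathbf{z}) + \mathbf{c}(\mathbf{z})$ (closure of elliptical laws under translation, Lemma~\ref{id-ell}(3)). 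Since a nonnegative deterministic shift can only increase a nondecreasing function, $(\boldsymbol{Y}_1\mid\boldsymbol{Z}=\mathbf{z}) \le_{st} (\boldsymbol{Y}_2\mid\boldsymbol{Z}=\mathbf{z})$ for every $\mathbf{z}$, and closure of $\le_{st}$ under mixtures (see, e.g., Shaked and Shanthikumar \cite{shaked2007stochastic}) gives $\boldsymbol{Y}_1\le_{st}\boldsymbol{Y}_2$. Equivalently one may run the argument of Lemma~\ref{st-dim1}(1): apply the identity of Lemma~\ref{id-lse} to the class of smooth nondecreasing test functions meeting the growth conditions of Lemma~\ref{ID-multi} (a determining class for $\le_{st}$), on which $\nabla f\ge 0$ and, because $\boldsymbol{\Sigma}_1=\boldsymbol{\Sigma}_2$, the Hessian term vanishes.

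For part~2, the mean inequality is immediate: applying $\le_{st}$ to the coordinate maps $f(\mathbf{x})=x_i$ gives $E\boldsymbol{Y}_1\le E\boldsymbol{Y}_2$ componentwise, and by \eqref{lse-mean} this reads $\boldsymbol{\mu}_1+E(\beta(\boldsymbol{Z}))\boldsymbol{\delta}_1 \le \boldsymbol{\mu}_2+E(\beta(\boldsymbol{Z}))\boldsymbol{\delta}_2$ (using that the relevant first moments are finite, as implicit in the statement). For the scale equality, assume $n\ge 2$ (for $n=1$ the statement is Lemma~\ref{st-dim1}), fix $\boldsymbol{a}\in\mathbb{R}^n_+$ with $\boldsymbol{a}\neq\boldsymbol{0}$, and note that $\mathbf{x}\mapsto\boldsymbol{a}'\mathbf{x}$ is nondecreasing, so $\boldsymbol{Y}_1\le_{st}\boldsymbol{Y}_2$ forces $\boldsymbol{a}'\boldsymbol{Y}_1\le_{st}\boldsymbol{a}'\boldsymbol{Y}_2$. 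By Lemma~\ref{lse-aff}, $\boldsymbol{a}'\boldsymbol{Y}_i\sim LSE_1(\boldsymbol{a}'\boldsymbol{\mu}_i,\ \boldsymbol{a}'\boldsymbol{\Sigma}_i\boldsymbol{a},\ \boldsymbol{a}'\boldsymbol{\delta}_i,\ g_{\boldsymbol{a}},\ \alpha,\beta,H)$, where $\boldsymbol{a}'\boldsymbol{\Sigma}_i\boldsymbol{a}>0$ because $\boldsymbol{\Sigma}_i$ is positive definite and $g_{\boldsymbol{a}}$ is the induced marginal density generator. Invoking the hypothesis that $g_{\boldsymbol{a}}$ satisfies Assumption~\ref{ass1}, Lemma~\ref{st-dim1}(2) applied to the pair $(\boldsymbol{a}'\boldsymbol{Y}_1,\boldsymbol{a}'\boldsymbol{Y}_2)$ yields $\boldsymbol{a}'\boldsymbol{\Sigma}_1\boldsymbol{a}=\boldsymbol{a}'\boldsymbol{\Sigma}_2\boldsymbol{a}$. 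Taking $\boldsymbol{a}=\mathbf{e}_i$ gives $(\boldsymbol{\Sigma}_1)_{ii}=(\boldsymbol{\Sigma}_2)_{ii}$, and then $\boldsymbol{a}=\mathbf{e}_i+\mathbf{e}_j$ gives $(\boldsymbol{\Sigma}_1)_{ij}=(\boldsymbol{\Sigma}_2)_{ij}$ for all $i,j$, i.e.\ $\boldsymbol{\Sigma}_1=\boldsymbol{\Sigma}_2$.

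The genuinely delicate points I expect are: in part~1, justifying that it suffices to test nondecreasing functions from the smooth polynomial-growth class (bypassed by the coupling argument, or by a routine approximation); and in part~2, the fact that $\le_{st}$ transmits only through \emph{nonnegative} linear functionals, so one recovers $\boldsymbol{a}'\boldsymbol{\Sigma}_1\boldsymbol{a}=\boldsymbol{a}'\boldsymbol{\Sigma}_2\boldsymbol{a}$ only on $\mathbb{R}^n_+$ — this is nevertheless enough because the vectors $\mathbf{e}_i$ and $\mathbf{e}_i+\mathbf{e}_j$, all in $\mathbb{R}^n_+$, already determine a symmetric matrix. The remaining steps are bookkeeping on Lemmas~\ref{lse-aff}, \ref{id-lse} and \ref{st-dim1}.
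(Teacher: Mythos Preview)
Your proposal is correct and follows essentially the same route as the paper. For part~1 the paper simply calls the proof ``routine'' (in the univariate Lemma~\ref{st-dim1}(1) it points to Lemma~\ref{id-lse}); your conditioning/coupling argument is a cleaner way to execute that routine step and sidesteps the approximation issue you flag. For part~2 the paper does exactly what you do: pass from $\le_{st}$ to $\le_{plst}$, apply Lemma~\ref{st-dim1}(2) to the univariate projections $\boldsymbol{Y}_{k,i}$ and $\boldsymbol{Y}_{k,i}+\boldsymbol{Y}_{k,j}$ (your choices $\boldsymbol{a}=\mathbf{e}_i$ and $\boldsymbol{a}=\mathbf{e}_i+\mathbf{e}_j$), and read off $\sigma_{1,ii}=\sigma_{2,ii}$ and then $\sigma_{1,ij}=\sigma_{2,ij}$. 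Your observation that $\mathbb{R}^n_+$ already suffices, even though the hypothesis is stated for all $\boldsymbol{a}\in\mathbb{R}^n$, is a nice sharpening.
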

\begin{proof}
1. The proof is routine and thus omitted.
\par 2. It follows from $\boldsymbol{Y}_1 \leq_{st} \boldsymbol{Y}_2$ that  $\boldsymbol{Y}_1 \leq_{plst} \boldsymbol{Y}_2$, which means $\boldsymbol{Y}_{1,i} \leq_{st} \boldsymbol{Y}_{2,i}$ and $\boldsymbol{Y}_{1,i} + \boldsymbol{Y}_{1,j} \leq_{st} \boldsymbol{Y}_{2,i} + \boldsymbol{Y}_{2,j}$ for all $1 \leq i,j \leq p$, where $\boldsymbol{Y}_{1,i}$($\boldsymbol{Y}_{2,i}$) stands for the $i$-th component of $\boldsymbol{Y}_{1}$($\boldsymbol{Y}_{2}$). Note that $\boldsymbol{Y}_1,\boldsymbol{Y}_2$ following (\ref{Y1Y2-assumption}) leads to
\begin{equation} \label{component-sim}
\boldsymbol{Y}_{1,i}  \sim  {\rm GLSE}_1\left({\mu}_{1,i}, {\sigma}_{1,ii}, {\delta}_{1,i}, \psi, \alpha, \beta, H\right) ,
\end{equation}
\begin{equation} \label{component2-sim}
\boldsymbol{Y}_{2,i}  \sim  {\rm GLSE}_1\left({\mu}_{2,i}, {\sigma}_{2,ii}, {\delta}_{2,i}, \psi, \alpha, \beta, H\right) ,
\end{equation}
\begin{equation} \nonumber
\boldsymbol{Y}_{1,i} +\boldsymbol{Y}_{1,j}  \sim {\rm GLSE}_1\left({\mu}_{1,i} + {\mu}_{1,j}, 2{\sigma}_{1,ij} + {\sigma}_{1,ii} + {\sigma}_{1,jj}, {\delta}_{1,i} + {\delta}_{1,j}, \psi, \alpha, \beta, H\right) .
\end{equation}
Applying Lemma \ref{st-dim1}, then the desired result is obtained.
\end{proof}
\begin{remark}
We know $\boldsymbol{Y}_1 \leq_{plst} \boldsymbol{Y}_2$ can be derived from $\boldsymbol{Y}_1 \leq_{st} \boldsymbol{Y}_2 $. Consequently, if one change ``$\leq_{st}$'' to ``$\leq_{plst}$'' in the second statement of Theorem \ref{st-dimn}, the result is still valid.
\end{remark}
%%%%%%%%%%%%%%%%%%%%%%%%%%%%%%%%%%%%%%%%%%%%%%%%%%%%%%%%%%%%%%%%%%%%%%%%%%%%%%%%%%%%%%%%%%%%%%%%%%%%%%%%%%%%%%%%%%%%%%%%%%%%%%%%%%%%%%%%%%%%%%%%%%%%%%%%%%%%%%%%%%%%%%%%%%%%%%%%%%%%%%%%%%%%%%%%%%%%%%%%%%%%%%%%%%%%%%%%
\iffalse
For any weight vector $\boldsymbol{\alpha} = (\alpha_{i}) \in \mathbf{R}_+^{n}$, let random variables $\boldsymbol{S}_1 = \sum_{i=1}^n \alpha_{i} \boldsymbol{Y}_{1,i}$, and $\boldsymbol{S}_2 = \sum_{i=1}^n \alpha_{i} \boldsymbol{Y}_{2,i}$.
In actuarial science, if we consider the components of $\boldsymbol{Y}_1$ and $\boldsymbol{Y}_2$ to be risks corresponding to two sets of insurance portfolios, then random variables $\boldsymbol{S}_1$ and $\boldsymbol{S}_2$ represent their collective risks. The following result is a direct result of Theorem \ref{st-dimn}, providing sufficient conditions for ordering the collective risks.
\begin{corollary}
If $\boldsymbol{\mu}_2 + \beta(\mathbf{z}) \boldsymbol{\delta}_2  \geq \boldsymbol{\mu}_1 + \beta(\mathbf{z}) \boldsymbol{\delta}_1$ for all $\mathbf{z}$ and $\boldsymbol{\Sigma}_1 = \boldsymbol{\Sigma}_2$, then $\boldsymbol{S}_1 \leq_{st} \boldsymbol{S}_2$.
\end{corollary}
\fi
%%%%%%%%%%%%%%%%%%%%%%%%%%%%%%%%%%%%%%%%%%%%%%%%%%%%%%%%%%%%%%%%%%%%%%%%%%%%%%%%%%%%%%%%%%%%%%%%%%%%%%%%%%%%
%%%%%%%%%%%%%%%%%%%%%%%%%%%%%%%%%%%%%%%%%%%%%%%%%%%%%%%%%%%%%%%%%%%%%%%%%%%%%%%%%%%%%%%%%%%%%%%%%%%%%%%%%%%%
The following results generalizes Theorem 3.2 in \cite{yin2019stochastic} and Proposition 5 in \cite{jamali2020integral} to the GLSE distribution case.
\begin{theorem} \label{cx}
Let $\boldsymbol{Y}_1,\boldsymbol{Y}_2$ follow (\ref{Y1Y2-assumption}). We have the following conclusions:
\begin{enumerate}
\item  If $\boldsymbol{\mu}_1 = \boldsymbol{\mu}_2$, $\boldsymbol{\delta}_1 = \boldsymbol{\delta}_2$ and $\boldsymbol{\Sigma}_2 - \boldsymbol{\Sigma}_1$ is positive semi-definite, then $\boldsymbol{Y}_1 \leq_{cx} (\leq_{lcx}, \leq_{ilcx}) \boldsymbol{Y}_2$.
\item If $\boldsymbol{\mu}_1 = \boldsymbol{\mu}_2$, then $\boldsymbol{Y}_1 \leq_{cx} (\leq_{lcx}, \leq_{ilcx}) \boldsymbol{Y}_2$ if and only if $\boldsymbol{\delta}_1 = \boldsymbol{\delta}_2$ and $\boldsymbol{\Sigma}_2 - \boldsymbol{\Sigma}_1$ is positive semi-definite.
\item If $\boldsymbol{\delta}_1 = \boldsymbol{\delta}_2$, then $\boldsymbol{Y}_1 \leq_{cx} (\leq_{lcx}, \leq_{ilcx}) \boldsymbol{Y}_2$ if and only if $\boldsymbol{\mu}_1 = \boldsymbol{\mu}_2$ and $\boldsymbol{\Sigma}_2 - \boldsymbol{\Sigma}_1$ is positive semi-definite.
\end{enumerate}
\end{theorem}
\begin{proof}
1.  As $\boldsymbol{\Sigma}_2 - \boldsymbol{\Sigma}_1$ is positive semi-definite, there
exists a matrix $\mathbf{A}_{n \times n}$ such that $\boldsymbol{\Sigma}_2 - \boldsymbol{\Sigma}_1 = \mathbf{AA}^T$. Suppose
$\mathbf{A}=\left( \boldsymbol{a}_1, \boldsymbol{a}_2, \dots, \boldsymbol{a}_{n}\right)$, where $\boldsymbol{a}_i$ is an $n$-dimensional column vector, for $i=1, 2, \dots, np$. One can notice that the Hessian matrix $\boldsymbol{H}_f\left( \mathbf{X}\right)$ for twice differentiable convex function $f$ is positive semi-definite, which shows that
\begin{equation} \nonumber
{\rm tr}\left( \left( \boldsymbol{\Sigma}_2 - \boldsymbol{\Sigma}_1 \right)\boldsymbol{H}_f\left( \mathbf{X}\right)\right) = {\rm tr}\left( \mathbf{A}^T \boldsymbol{H}_f\left( \mathbf{X}\right) \mathbf{A}\right) =\sum_{i=1}^{n} \boldsymbol{a}^T_i \boldsymbol{H}_f\left( \mathbf{X}\right) \boldsymbol{a}_i \ge 0.
\end{equation}
Then $E\left[ f\left(\boldsymbol{Y}_1\right) \right] - E\left[ f\left(\boldsymbol{Y}_2\right) \right] \geq 0$ for all convex function $f$ by applying Lemma \ref{id-lse}, i.e. $\boldsymbol{Y}_1 \leq_{cx} \boldsymbol{Y}_2$. $\boldsymbol{Y}_1 \leq_{lcx} \boldsymbol{Y}_2$ and $\boldsymbol{Y}_1 \leq_{ilcx} \boldsymbol{Y}_2$ can be easily derived from $\boldsymbol{Y}_1 \leq_{cx} \boldsymbol{Y}_2$ by chain of implications (\ref{im-chain}).
\par 2. \& 3. One the one hand, it can be derived from $\boldsymbol{Y}_1 \leq_{cx} \boldsymbol{Y}_2$ that $E\boldsymbol{Y}_1 = E\boldsymbol{Y}_2$; therefore, if we know $\boldsymbol{\mu}_1 = \boldsymbol{\mu}_2$, then $\boldsymbol{\delta}_1 = \boldsymbol{\delta}_2$ can be obtained and vice versa. On the other hand, $\boldsymbol{Y}_1 \leq_{cx} \boldsymbol{Y}_2$ implies $\boldsymbol{Y}_1 \leq_{lcx} \boldsymbol{Y}_2$. We claim that $\boldsymbol{\Sigma}_2 - \boldsymbol{\Sigma}_1$ is positive semi-definite. Otherwise, there exists $\boldsymbol{a} \in \mathbb{R}^n$ such that $\boldsymbol{a}^T \left(\boldsymbol{\Sigma}_2 - \boldsymbol{\Sigma}_1\right) \boldsymbol{a}<0$. Let $f(\mathbf{x}) = \left(\boldsymbol{a}^T\mathbf{x}\right)^2$, which is convex. According to Definition \ref{def-order}, we have $E\left( \boldsymbol{a}^T\boldsymbol{Y}_1\boldsymbol{Y}_1^T\boldsymbol{a} \right) \leq E\left( \boldsymbol{a}^T\boldsymbol{Y}_1\boldsymbol{Y}_1^T\boldsymbol{a} \right)$. It can be derived by considering (\ref{lse-cov}) that $\boldsymbol{a}^T \left(\boldsymbol{\Sigma}_2 - \boldsymbol{\Sigma}_1\right) \boldsymbol{a} \geq 0$, which leads to a contradiction. Chain of implications (\ref{im-chain}) shows that $\boldsymbol{Y}_1 \leq_{lcx} \boldsymbol{Y}_2$ if and only if $\boldsymbol{Y}_1 \leq_{ilcx} \boldsymbol{Y}_2$, the result is still valid if one changes $\boldsymbol{Y}_1 \leq_{lcx} \boldsymbol{Y}_2$ to $\boldsymbol{Y}_1 \leq_{ilcx} \boldsymbol{Y}_2$.
\end{proof}
%%%%%%%%%%%%%%%%%%%%%%%%%%%%%%%%%%%%%%%%%%%%%%%%%%%%%%%%%%%%%%%%%%%%%%%%%%%%%%%%%%%%%%%%%%%%%%%%%%%%%%%%%%%%
\par The increasing convex order, also known as stop-loss order, is widely used in the area of actuarial science. The following theorem provides necessary and sufficient conditions for the increasing convex ordering of two univariate GLSE distributed random variables. Some related conditions for elliptical distributions can be found in \cite{pan2016stochastic}.
\begin{lemma} \label{icx-dim1}
Let $\boldsymbol{Y}_1,\boldsymbol{Y}_2$ follow (\ref{GLSEdim1}). We have the following conclusions:
\begin{enumerate}
\item If $\mu_2 - \mu_1 + \beta(\mathbf{z}) \left({\delta}_2 - {\delta}_1 \right) \geq 0$ for all $\mathbf{z}$ and ${\sigma}_1 \leq {\sigma}_2$, then ${Y}_1 \leq_{icx} {Y}_2$.
\item If ${Y}_1 \leq_{icx} {Y}_2$ and the corresponding density generator $g_1$ satisfies Assumption \ref{ass2}, then $\mu_2 - \mu_1 + E\beta(\boldsymbol{Z}) \left({\delta}_2 - {\delta}_1 \right) \geq 0$ and ${\sigma}_1 \leq {\sigma}_2$.
\end{enumerate}
\end{lemma}
\begin{proof}
1. The implication follows from Lemma \ref{id-lse}.
\par 2. Note that ${Y}_1 \leq_{icx} {Y}_2$ implies $E{Y}_1 \leq E{Y}_2$, and thus $\mu_2 - \mu_1 + E\beta(\boldsymbol{Z}) \left({\delta}_2 - {\delta}_1 \right)\geq 0$. We now show that ${\sigma}_1 \leq {\sigma}_2$. If ${\sigma}_1 > {\sigma}_2$, then $\overline{F}_2(t) < \overline{F}_1(t)$ for sufficiently large positive $t$, which can be proved according to the proof of Lemma \ref{st-dim1}. Then, for sufficiently large positive $t$, we have
\begin{equation} \nonumber
E({Y}_1 - t)_+ = \int_t^{+\infty} \overline{F}_1(x) dx > \int_t^{+\infty} \overline{F}_2(x) dx= E({Y}_2 - t)_+,
\end{equation}
which results in a contradiction to ${Y}_1 \leq_{icx} {Y}_2$.
\end{proof}
%%%%%%%%%%%%%%%%%%%%%%%%%%%%%%%%%%%%%%%%%%%%%%%%%%%%%%%%%%%%%%%%%%%%%%%%%%%%%%%%%%%%%%%%%%%%%%%%%%%%%%%%%%%%
\begin{figure}
  \centering
  \subfloat[]{%
  \resizebox*{7cm}{!}{\includegraphics{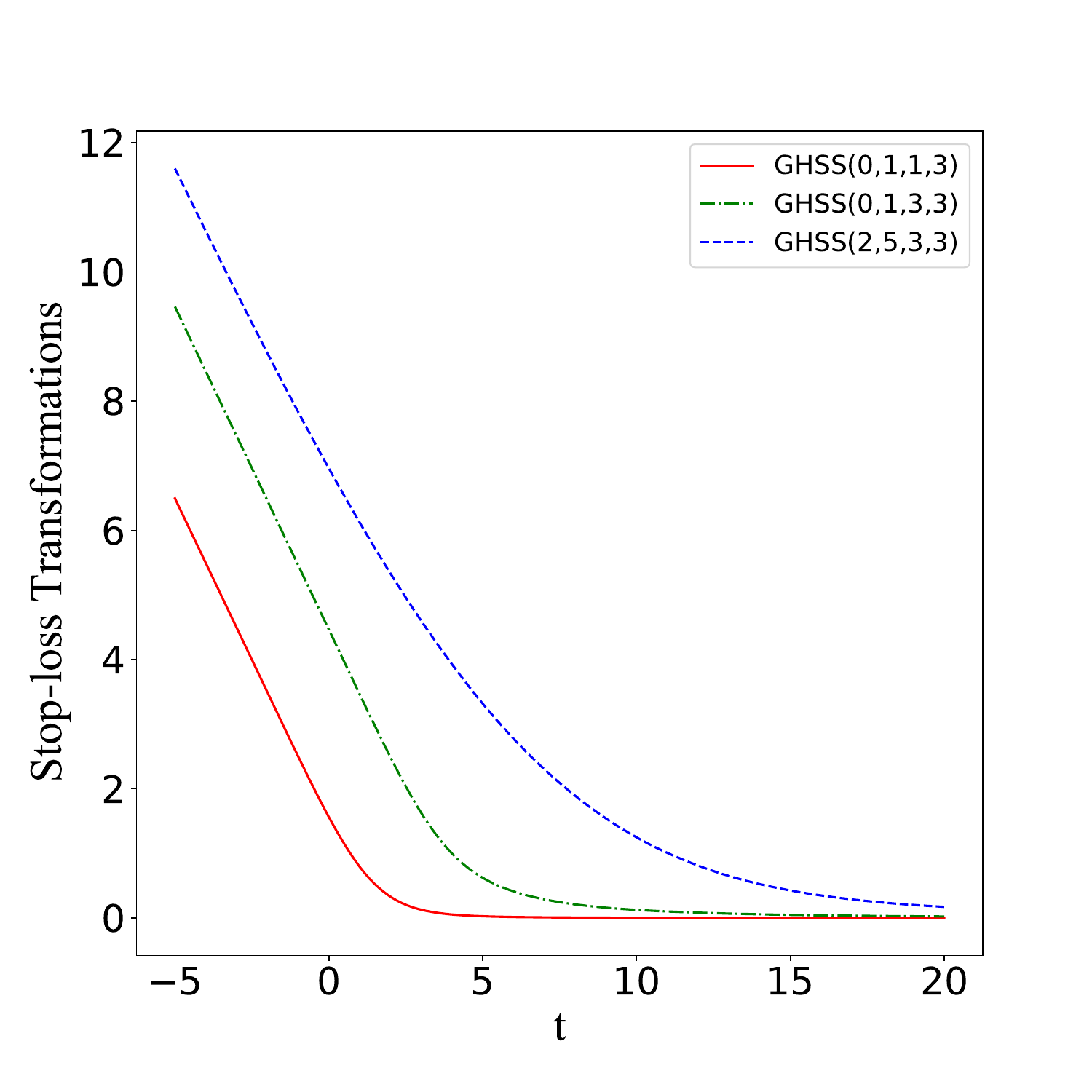}}}\hspace{5pt}
  \subfloat[]{%
  \resizebox*{7cm}{!}{\includegraphics{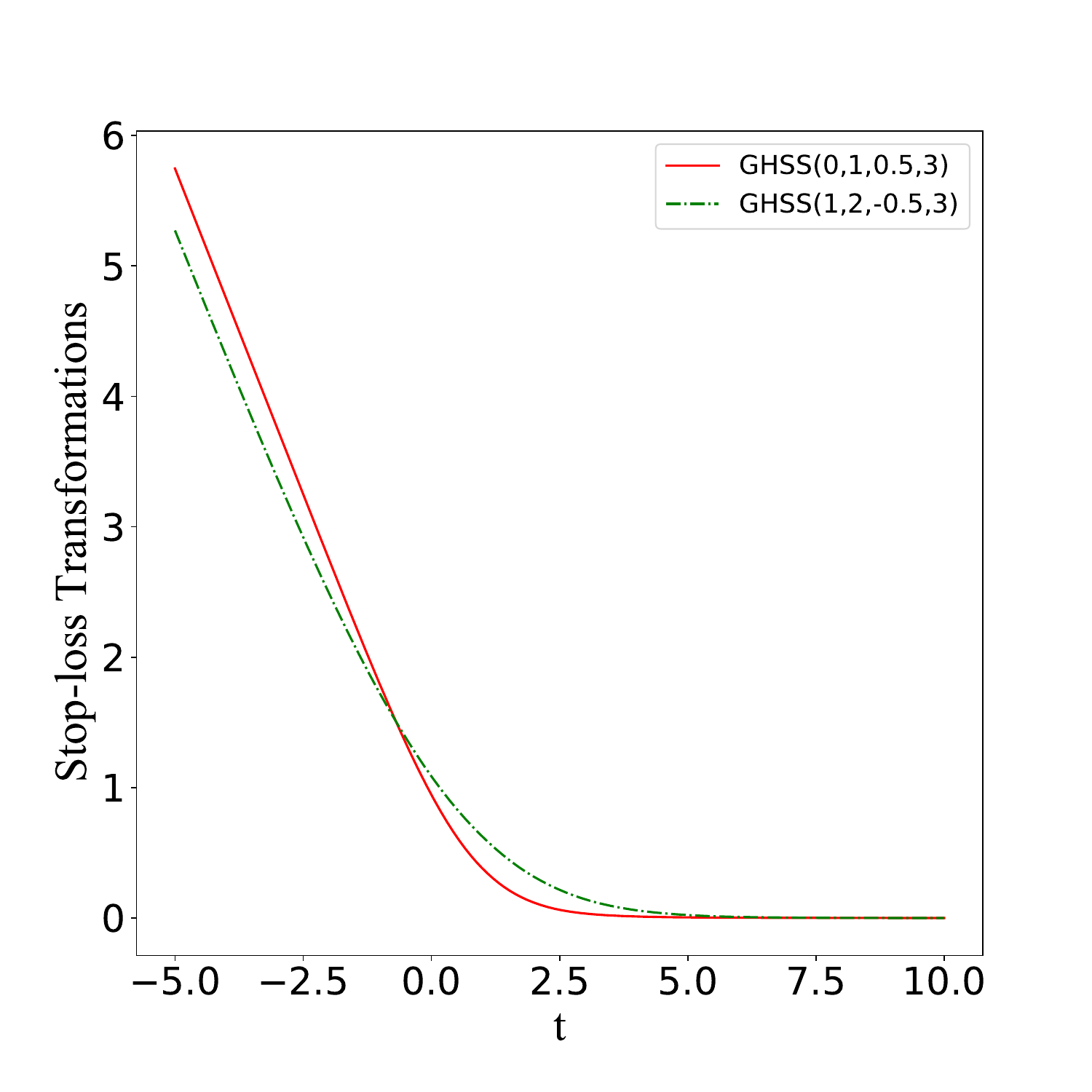}}}
  \caption{Stoploss transformations of univariate GHSS distributions} \label{fig:stoploss}
  \end{figure}

\begin{example}
As an illustration for Lemma \ref{icx-dim1}, the stop-loss transformations for the univariate GHSS distributions are plotted in Figure \ref{fig:stoploss}. It is easy to check that the conditions in Lemma \ref{icx-dim1}(1) are satisfied, and consistent with the increasing convex ordering among these three distributions displayed by the stop-loss transformations in Figure \ref{fig:stoploss}(a). Figure \ref{fig:stoploss}(b) provides a counterexample under the setting $(\mu_1,\sigma_1,\delta_1)=(0,1,0.5)$ and $(\mu_2,\sigma_2,\delta_2)=(1,2,-0.5)$. It is clear that size relation between ${\mu}_2 + \beta(\mathbf{z}) {\delta}_2 $ and ${\mu}_1 + \beta(\mathbf{z}) {\delta}_1$ varies with respect to $\mathbf{z}$, meaning that the conditions in Lemma \ref{icx-dim1}(1) are not fully satisfied. This agrees with the plots in Figure \ref{fig:stoploss}(b) that the stop-loss transformations of these distributions cross with each other.
\end{example}
Some necessary and sufficient conditions for some stochastic orders of multinormal mean-variance mixture were studied in \cite{jamali2020integral}; however, the sufficient conditions for the multivariate increasing convex order were not given. The following theorem fills this gap, and generalizes Theorem 7 in \cite{muller2001stochastic} and Theorem 3.3 in \cite{yin2019stochastic} to the case of GLSE distributions.

\begin{theorem} \label{icx}
Let $\boldsymbol{Y}_1,\boldsymbol{Y}_2$ follows (\ref{Y1Y2-assumption}). The following results hold:
\begin{enumerate}
\item If $\boldsymbol{\mu}_2 + \beta(\mathbf{z}) \boldsymbol{\delta}_2  \geq \boldsymbol{\mu}_1 + \beta(\mathbf{z}) \boldsymbol{\delta}_1$ for all $\mathbf{z}$ and $\boldsymbol{\Sigma}_2 - \boldsymbol{\Sigma}_1$ is positive semi-definite, then $\boldsymbol{Y}_1 \leq_{icx} (\leq_{iplcx}) \boldsymbol{Y}_2$.
\item If $\boldsymbol{Y}_1 \leq_{icx} (\leq_{iplcx}) \boldsymbol{Y}_2$ and the corresponding density generator $g_1$ of $\boldsymbol{a}^T\boldsymbol{Y}_1$ satisfies Assumption \ref{ass2} for all $\boldsymbol{a} \in \mathbb{R}_+^n$, then $\boldsymbol{\mu}_2 + E\beta(\boldsymbol{Z}) \boldsymbol{\delta}_2  \geq \boldsymbol{\mu}_1 + E\beta(\boldsymbol{Z}) \boldsymbol{\delta}_1$ and $\boldsymbol{\Sigma}_2 - \boldsymbol{\Sigma}_1$ is copositive.
\end{enumerate}
\end{theorem}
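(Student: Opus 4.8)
\emph{Part 1 (sufficiency).} The plan is to read the conclusion off the integral identity of Lemma~\ref{id-lse}. First I would reduce the claim to twice continuously differentiable increasing convex $f$ satisfying the polynomial growth conditions needed there; an arbitrary increasing convex $f$ is then recovered by the usual truncation and mollification, since the increasing convex order is generated by such a subclass. Fix such an $f$. Monotonicity of $f$ gives $\nabla f(\mathbf{x})\ge\mathbf{0}$ componentwise, and convexity gives $H_f(\mathbf{x})\succeq0$. In the identity of Lemma~\ref{id-lse}, the first integrand $\bigl(\boldsymbol{\mu}_2-\boldsymbol{\mu}_1+\beta(\mathbf{z})(\boldsymbol{\delta}_2-\boldsymbol{\delta}_1)\bigr)'\nabla f(\mathbf{x})$ is nonnegative because the hypothesis $\boldsymbol{\mu}_2+\beta(\mathbf{z})\boldsymbol{\delta}_2\ge\boldsymbol{\mu}_1+\beta(\mathbf{z})\boldsymbol{\delta}_1$ makes the left factor componentwise nonnegative and the inner product of two componentwise nonnegative vectors is nonnegative; in the second integrand $\alpha^2(\mathbf{z})\ge0$ and, writing $\boldsymbol{\Sigma}_2-\boldsymbol{\Sigma}_1=\mathbf{C}'\mathbf{C}$ (possible since $\boldsymbol{\Sigma}_2-\boldsymbol{\Sigma}_1$ is positive semi-definite), one gets ${\rm tr}\bigl((\boldsymbol{\Sigma}_2-\boldsymbol{\Sigma}_1)H_f(\mathbf{x})\bigr)={\rm tr}\bigl(\mathbf{C}\,H_f(\mathbf{x})\,\mathbf{C}'\bigr)\ge0$ because $\mathbf{C}\,H_f(\mathbf{x})\,\mathbf{C}'\succeq0$. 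Hence every term in the identity is nonnegative, so $E[f(\boldsymbol{Y}_1)]\le E[f(\boldsymbol{Y}_2)]$ and therefore $\boldsymbol{Y}_1\le_{icx}\boldsymbol{Y}_2$.

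\emph{Part 2 (necessity).} Here the plan is to project onto positive rays and invoke the one-dimensional Lemma~\ref{icx-dim1}(2). For any $\boldsymbol{a}\in\mathbb{R}_+^n$ and any increasing convex $g:\mathbb{R}\to\mathbb{R}$, the map $\mathbf{x}\mapsto g(\langle\boldsymbol{a},\mathbf{x}\rangle)$ is increasing convex on $\mathbb{R}^n$ (its gradient is $g'(\langle\boldsymbol{a},\mathbf{x}\rangle)\boldsymbol{a}\ge\mathbf{0}$ and its Hessian is $g''(\langle\boldsymbol{a},\mathbf{x}\rangle)\boldsymbol{a}\boldsymbol{a}'\succeq0$), so $\boldsymbol{Y}_1\le_{icx}\boldsymbol{Y}_2$ forces $\boldsymbol{a}'\boldsymbol{Y}_1\le_{icx}\boldsymbol{a}'\boldsymbol{Y}_2$ (the implication $\le_{icx}\Rightarrow\le_{iplcx}$ recorded in Section~\ref{preliminaries}). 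By Lemma~\ref{lse-aff} applied to the $1\times n$ matrix $\boldsymbol{a}'$, we have $\boldsymbol{a}'\boldsymbol{Y}_k\sim LSE_1(\boldsymbol{a}'\boldsymbol{\mu}_k,\,\boldsymbol{a}'\boldsymbol{\Sigma}_k\boldsymbol{a},\,\boldsymbol{a}'\boldsymbol{\delta}_k,\,g,\alpha,\beta,H)$ for $k=1,2$. Since the density generator of $\boldsymbol{a}'\boldsymbol{Y}_1$ satisfies Assumption~\ref{ass2}, Lemma~\ref{icx-dim1}(2) applies to the pair $\boldsymbol{a}'\boldsymbol{Y}_1\le_{icx}\boldsymbol{a}'\boldsymbol{Y}_2$ and gives, for every $\boldsymbol{a}\in\mathbb{R}_+^n$,
\begin{equation}\nonumber
\boldsymbol{a}'(\boldsymbol{\mu}_2-\boldsymbol{\mu}_1)+E\beta(\mathbf{z})\,\boldsymbol{a}'(\boldsymbol{\delta}_2-\boldsymbol{\delta}_1)\ge0\quad\text{and}\quad\boldsymbol{a}'\boldsymbol{\Sigma}_1\boldsymbol{a}\le\boldsymbol{a}'\boldsymbol{\Sigma}_2\boldsymbol{a}.
\end{equation}
Taking $\boldsymbol{a}=\mathbf{e}_i$ for $i=1,\dots,n$ in the first inequality yields $\boldsymbol{\mu}_2+E\beta(\mathbf{z})\boldsymbol{\delta}_2\ge\boldsymbol{\mu}_1+E\beta(\mathbf{z})\boldsymbol{\delta}_1$ componentwise; the second inequality, valid for all $\boldsymbol{a}\ge\mathbf{0}$, says precisely that $\boldsymbol{\Sigma}_2-\boldsymbol{\Sigma}_1$ is copositive.

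\emph{Expected main obstacle.} The only genuinely delicate point is the smoothing step in Part~1: Lemma~\ref{id-lse} is available only for $C^2$ functions of controlled growth, so one must carefully approximate an arbitrary increasing convex function by such functions and pass to the limit in the expectations. Everything else is bookkeeping — the sign rules for $\nabla f$ and $H_f$, the elementary fact ${\rm tr}(\mathbf{A}\mathbf{B})\ge0$ for positive semi-definite $\mathbf{A},\mathbf{B}$, and checking that Lemma~\ref{lse-aff} together with Assumption~\ref{ass2} descend to the one-dimensional projections $\boldsymbol{a}'\boldsymbol{Y}_k$. One should also expect, as is typical for Hessian-type orders, that the gap between the sufficient condition (positive semi-definiteness of $\boldsymbol{\Sigma}_2-\boldsymbol{\Sigma}_1$) and the necessary one (copositivity) cannot be closed.
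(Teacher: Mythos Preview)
Your proposal is correct and follows essentially the same route as the paper: Part~1 is read off the integral identity of Lemma~\ref{id-lse}, and Part~2 passes to positive linear projections via Lemma~\ref{lse-aff} and then invokes Lemma~\ref{icx-dim1}(2). The only minor difference is that the paper obtains the mean inequality in Part~2 directly from $E\boldsymbol{Y}_1\le E\boldsymbol{Y}_2$ rather than from the coordinate projections $\boldsymbol{a}=\mathbf{e}_i$, and your explicit treatment of the smoothing/approximation step in Part~1 is more careful than the paper's one-line appeal to Lemma~\ref{id-lse}.
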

\begin{proof} 
1. The conditions $\boldsymbol{\mu}_2 + \beta(\mathbf{z}) \boldsymbol{\delta}_2  \geq \boldsymbol{\mu}_1 + \beta(\mathbf{z}) \boldsymbol{\delta}_1$ for all $\mathbf{z}$ and $\boldsymbol{\Sigma}_2 - \boldsymbol{\Sigma}_1$ is positive semi-definite imply $\boldsymbol{Y}_1 \leq_{icx} \boldsymbol{Y}_2$ by Lemma \ref{id-lse}. Then $\boldsymbol{Y}_1 \leq_{iplcx} \boldsymbol{Y}_2$ can be easily derived by chain of implications (\ref{im-chain}).

\par 2. Since $\boldsymbol{Y}_1 \leq_{icx} \boldsymbol{Y}_2$ ($\boldsymbol{Y}_1 \leq_{iplcx} \boldsymbol{Y}_2$) implies $E\boldsymbol{Y}_1 \leq E\boldsymbol{Y}_2$, and thus $\boldsymbol{\mu}_2 + E\beta(\boldsymbol{Z}) \boldsymbol{\delta}_2  \geq \boldsymbol{\mu}_1 + E\beta(\boldsymbol{Z}) \boldsymbol{\delta}_1$.
Moreover, $\boldsymbol{Y}_1 \leq_{icx} \boldsymbol{Y}_2$ implies $\boldsymbol{Y}_1 \leq_{iplcx} \boldsymbol{Y}_2$, which means $\boldsymbol{a}^T\boldsymbol{Y}_1 \leq_{icx} \boldsymbol{a}^T\boldsymbol{Y}_2$ is valid for all $\boldsymbol{a} \in \mathbb{R}_+^n$. From Lemma \ref{lse-aff}, we have $\boldsymbol{a}^T\boldsymbol{Y}_i \sim {\rm GLSE}_1\left( \boldsymbol{a}^T \boldsymbol{\mu}_i, \boldsymbol{a}^T\boldsymbol{\Sigma}_i\boldsymbol{a},\boldsymbol{a}^T\boldsymbol{\delta}_i,\psi,\alpha,\beta,H\right)$, $i=1,2$. Both $\boldsymbol{a}^T\boldsymbol{Y}_1$ and $\boldsymbol{a}^T\boldsymbol{Y}_2$ are quantities, and the density generator they share satisfies Assumption \ref{ass2}, thus, $\boldsymbol{a}^T\boldsymbol{\Sigma}_2\boldsymbol{a} - \boldsymbol{a}^T\boldsymbol{\Sigma}_1\boldsymbol{a} \geq 0$ can be derived from Lemma \ref{icx-dim1}.
\end{proof}
It is worth noting that $\boldsymbol{Y}_1 \leq_{icx} \boldsymbol{Y}_2$ does not necessarily imply $\boldsymbol{Y}_1 \leq_{ilcx} \boldsymbol{Y}_2$ for all distributions $\boldsymbol{Y}_1$ and $\boldsymbol{Y}_2$. A counterexample could be found in \cite{pan2016stochastic}. But $\boldsymbol{Y}_1 \leq_{icx} \boldsymbol{Y}_2$ does imply $\boldsymbol{Y}_1 \leq_{iplcx} \boldsymbol{Y}_2$ because $g_a(\boldsymbol{x}) = g(\boldsymbol{a}^T \boldsymbol{x})$ is increasing convex for any increasing convex function $g$ and $\boldsymbol{a} \in \mathbb{R}_+^n$.
%If one replaces ``$\leq_{icx}$'' to ``$\leq_{iplcx}$'' in Theorem \ref{icx}, the result is still valid.
%%%%%%%%%%%%%%%%%%%%%%%%%%%%%%%%%%%%%%%%%%%%%%%%%%%%%%%%%%%%%%%%%%%%%%%%%%%%%%%%%%%%%%%%%%%%%%%%%%%%%%%%%%%%%%%%%%%%%%%%%%%%%%%%%%%%%%%%%%%%%%%%%%%%%%%%%%%%%%%%%%%%%%%%%%%%%%%%%%%%%%%%%%%%%%%%%%%%%%%%%%%%%%%%%%%%%%%%
\iffalse
As a result, the collective risks $\boldsymbol{S}_1$ and $\boldsymbol{S}_2$ can be compared under the increasing convex order as well.
\begin{corollary}
If $\boldsymbol{\mu}_2 + \beta(\mathbf{z}) \boldsymbol{\delta}_2  \geq \boldsymbol{\mu}_1 + \beta(\mathbf{z}) \boldsymbol{\delta}_1$ for all $\mathbf{z}$ and $\boldsymbol{\Sigma}_2 - \boldsymbol{\Sigma}_1$ is positive semi-definite, then $\boldsymbol{S}_1 \leq_{icx} \boldsymbol{S}_2$.
\end{corollary}
\fi
%%%%%%%%%%%%%%%%%%%%%%%%%%%%%%%%%%%%%%%%%%%%%%%%%%%%%%%%%%%%%%%%%%%%%%%%%%%%%%%%%%%%%%%%%%%%%%%%%%%%%%%%%%%%
The following result generalizes Theorem 12 in \cite{muller2001stochastic}, Theorem 3.6 in \cite{yin2019stochastic} and Proposition 6 in \cite{Amiri2020Linear}.
\begin{theorem}
Let $\boldsymbol{Y}_1,\boldsymbol{Y}_2$ follow (\ref{Y1Y2-assumption}). The following statements are true:
\begin{enumerate}
\item If $\boldsymbol{\mu}_1 = \boldsymbol{\mu}_2$, $\boldsymbol{\delta}_1 = \boldsymbol{\delta}_2$ and $\boldsymbol{\Sigma}_2 \geq \boldsymbol{\Sigma}_1$, then $\boldsymbol{Y}_1 \leq_{dcx} \boldsymbol{Y}_2$.
\item If $\boldsymbol{\mu}_1 = \boldsymbol{\mu}_2$, then $\boldsymbol{Y}_1 \leq_{dcx} \boldsymbol{Y}_2$ if and only if $\boldsymbol{\delta}_1 = \boldsymbol{\delta}_2$ and $\boldsymbol{\Sigma}_2 \geq \boldsymbol{\Sigma}_1$.
\item If $\boldsymbol{\delta}_1 = \boldsymbol{\delta}_2$, then $\boldsymbol{Y}_1 \leq_{dcx} \boldsymbol{Y}_2$ if and only if $\boldsymbol{\mu}_1 = \boldsymbol{\mu}_2$ and $\boldsymbol{\Sigma}_2 \geq \boldsymbol{\Sigma}_1$.
\end{enumerate}
\end{theorem}
\begin{proof}
1. The proof is routine and thus omitted.
\par 2. \& 3. Note that the functions $f_1(\mathbf{x}) = x_i$ and $f_2(\mathbf{x}) = - x_i $ are directionally convex for all $1 \leq i \leq n$. Therefore, $E\boldsymbol{Y}_1 = E\boldsymbol{Y}_2$. Then the equivalence between $\boldsymbol{\delta}_1$ and $\boldsymbol{\delta}_2$ (alternatively, $\boldsymbol{\mu}_1$ and $\boldsymbol{\mu}_2$) can be established by using the same method in the proof of Theorem \ref{cx}.
\par Let $f_3(\mathbf{x}) = x_i x_j$, which is directionally convex for all $1 \leq i,j \leq n$. It can be derived that $Cov\left(\boldsymbol{Y}_1\right) \leq Cov\left(\boldsymbol{Y}_2\right)$, then we claim $\boldsymbol{\Sigma}_2 \geq \boldsymbol{\Sigma}_1$ on the ground that $\boldsymbol{\delta}_1 = \boldsymbol{\delta}_2$.
\end{proof}
%%%%%%%%%%%%%%%%%%%%%%%%%%%%%%%%%%%%%%%%%%%%%%%%%%%%%%%%%%%%%%%%%%%%%%%%%%%%%%%%%%%%%%%%%%%%%%%%%%%%%%%%%%%%

%%%%%%%%%%%%%%%%%%%%%%%%%%%%%%%%%%%%%%%%%%%%%%%%%%%%%%%%%%%%%%%%%%%%%%%%%%%%%%%%%%%%%%%%%%%%%%%%%%%%%%%%%%%%
The following theorem considers the componentwise convex order. As some special cases, the multinormal case can be found in \cite{arlotto2009hessian} while the multivariate elliptical case can be found in \cite{yin2019stochastic}.
\begin{theorem}
Let $\boldsymbol{Y}_1,\boldsymbol{Y}_2$ follow (\ref{Y1Y2-assumption}). The following statements hold:
\begin{enumerate}
\item If $\boldsymbol{\mu}_1 = \boldsymbol{\mu}_2$, $\boldsymbol{\delta}_1 = \boldsymbol{\delta}_2$, $\sigma_{1,ii} \leq \sigma_{2,ii}$ for $1 \leq i \leq n$ and $\sigma_{1,ij} = \sigma_{2,ij}$ for $1 \leq i<j \leq n$, then $\boldsymbol{Y}_1 \leq_{ccx} \boldsymbol{Y}_2$.
\item If $\boldsymbol{\mu}_1 = \boldsymbol{\mu}_2$, then $\boldsymbol{Y}_1 \leq_{ccx} \boldsymbol{Y}_2$  if and only if $\boldsymbol{\delta}_1 = \boldsymbol{\delta}_2$, $\sigma_{1,ii} \leq \sigma_{2,ii}$ for $1 \leq i \leq n$ and $\sigma_{1,ij} = \sigma_{2,ij}$ for $1 \leq i<j \leq n$.
\item If $\boldsymbol{\delta}_1 = \boldsymbol{\delta}_2$, then $\boldsymbol{Y}_1 \leq_{ccx} \boldsymbol{Y}_2$  if and only if $\boldsymbol{\mu}_1 = \boldsymbol{\mu}_2$, $\sigma_{1,ii} \leq \sigma_{2,ii}$ for $1 \leq i \leq n$ and $\sigma_{1,ij} = \sigma_{2,ij}$ for $1 \leq i<j \leq n$.
\end{enumerate}
\end{theorem}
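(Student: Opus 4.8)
The plan is to follow the template already used above for the convex and the directionally convex orderings, exploiting the fact that componentwise convexity of a $C^2$ function $f$ is exactly the requirement that $\frac{\partial^2}{\partial x_i^2}f(\mathbf{x})\geq 0$ for every $i$, i.e. that $H_f(\mathbf{x})$ has nonnegative diagonal but otherwise unconstrained entries. For the sufficiency in item 1 I would substitute such a test function into the identity of Lemma \ref{id-lse}: since $\boldsymbol{\mu}_1=\boldsymbol{\mu}_2$ and $\boldsymbol{\delta}_1=\boldsymbol{\delta}_2$, the gradient term vanishes identically and, up to a nonnegative constant, $E[f(\boldsymbol{Y}_2)]-E[f(\boldsymbol{Y}_1)]$ reduces to $\int_{\mathbb{R}^q}\int_0^1\int_{\mathbb{R}^n}\alpha^2(\mathbf{z})\,{\rm tr}\left((\boldsymbol{\Sigma}_2-\boldsymbol{\Sigma}_1)H_f(\mathbf{x})\right)\phi_{1\lambda}(\mathbf{x})\,d\mathbf{x}\,d\lambda\,dH(\mathbf{z})$. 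Under the hypotheses $\sigma_{1,ii}\leq\sigma_{2,ii}$ for all $i$ and $\sigma_{1,ij}=\sigma_{2,ij}$ for $i<j$, the matrix $\boldsymbol{\Sigma}_2-\boldsymbol{\Sigma}_1$ is diagonal with nonnegative diagonal, so ${\rm tr}\left((\boldsymbol{\Sigma}_2-\boldsymbol{\Sigma}_1)H_f(\mathbf{x})\right)=\sum_{i=1}^n(\sigma_{2,ii}-\sigma_{1,ii})\frac{\partial^2}{\partial x_i^2}f(\mathbf{x})\geq 0$ for every componentwise convex $f$; since $\alpha^2\geq 0$ and $\phi_{1\lambda}\geq 0$ the integral is nonnegative, whence $Ef(\boldsymbol{Y}_1)\leq Ef(\boldsymbol{Y}_2)$ and $\boldsymbol{Y}_1\leq_{ccx}\boldsymbol{Y}_2$. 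Exactly as in the (``routine'') sufficiency proofs of Theorem \ref{cx} and of the directionally convex ordering theorem above, this is carried out first for $C^2$ componentwise convex $f$ with bounded derivatives, which obey the polynomial growth hypothesis of Lemma \ref{id-lse}, and then extended to all componentwise convex $f$ by a standard mollification and approximation argument.

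For the necessity in items 2 and 3, the ``if'' directions are precisely item 1, so only the ``only if'' parts require work, and for these I would bypass the identity and instead plug explicit componentwise convex test functions into the definition of $\leq_{ccx}$, reading the conclusions off the moment formulas (\ref{lse-mean}) and (\ref{lse-cov}). Taking $f(\mathbf{x})=x_i$ and $f(\mathbf{x})=-x_i$ (affine, hence componentwise convex) forces $E\boldsymbol{Y}_1=E\boldsymbol{Y}_2$; with (\ref{lse-mean}) this yields $\boldsymbol{\delta}_1=\boldsymbol{\delta}_2$ under the hypothesis $\boldsymbol{\mu}_1=\boldsymbol{\mu}_2$ (item 2) and $\boldsymbol{\mu}_1=\boldsymbol{\mu}_2$ under the hypothesis $\boldsymbol{\delta}_1=\boldsymbol{\delta}_2$ (item 3), apart from the degenerate case $E\beta(\boldsymbol{Z})=0$. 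Next, for $i\neq j$ both $f(\mathbf{x})=x_ix_j$ and $f(\mathbf{x})=-x_ix_j$ are componentwise convex (each is affine in every single coordinate), so $E(Y_{1,i}Y_{1,j})=E(Y_{2,i}Y_{2,j})$; combining this with $E\boldsymbol{Y}_1=E\boldsymbol{Y}_2$ and (\ref{lse-cov}) with $\boldsymbol{\delta}_1=\boldsymbol{\delta}_2$ gives $\sigma_{1,ij}=\sigma_{2,ij}$ for $i<j$. Finally $f(\mathbf{x})=x_i^2$ is componentwise convex whereas $-x_i^2$ is not, so only the one-sided inequality $E(Y_{1,i}^2)\leq E(Y_{2,i}^2)$ is available; with $E\boldsymbol{Y}_1=E\boldsymbol{Y}_2$ this reads $Var(Y_{1,i})\leq Var(Y_{2,i})$, and (\ref{lse-cov}) with $\boldsymbol{\delta}_1=\boldsymbol{\delta}_2$ then delivers $\sigma_{1,ii}\leq\sigma_{2,ii}$.

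I do not anticipate a genuine conceptual obstacle: the argument parallels the preceding theorems almost verbatim, the only new ingredient being that componentwise convexity constrains exactly the diagonal of the Hessian rather than the whole matrix (as for $\leq_{cx}$) or all of its entries (as for $\leq_{dcx}$). The two points that still need a little care are both routine. The first is the approximation step that legitimises the use of Lemma \ref{id-lse} in item 1, since the natural generators of $\leq_{ccx}$, such as $(x_i-t)_+$, are not twice differentiable; this is handled exactly as the omitted ``routine'' steps in the earlier proofs. The second is keeping track of the non-degeneracy conditions $\psi'(0)\neq 0$, $E(\alpha^2(\boldsymbol{Z}))>0$ and $E(\beta(\boldsymbol{Z}))>0$ — all automatic once the first two moments of $\boldsymbol{Y}_1$ and $\boldsymbol{Y}_2$ exist and the scale part is non-null — under which $\boldsymbol{\mu}$, $\boldsymbol{\delta}$ and $\boldsymbol{\Sigma}$ are uniquely recovered from $E\boldsymbol{Y}$ and $Cov(\boldsymbol{Y})$.
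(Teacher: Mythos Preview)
Your proposal is correct and follows essentially the same approach as the paper: Lemma \ref{id-lse} for the sufficiency in item 1, and the test functions $x_i$, $-x_i$, $x_ix_j$, $-x_ix_j$, $x_i^2$ plugged into (\ref{lse-mean}) and (\ref{lse-cov}) for the necessity in items 2 and 3. If anything, your write-up is more detailed than the paper's, which omits item 1 as ``routine'' and does not flag the non-degeneracy conditions you mention.
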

\begin{proof}
1. The proof is routine and thus omitted.
\par 2. \& 3. Note that the functions $f_1(\mathbf{x}) = x_i$ and $f_2(\mathbf{x}) = - x_i $ are componentwise convex for all $1 \leq i \leq n$, by applying which one has $E\boldsymbol{Y}_1 = E\boldsymbol{Y}_2$. Then the equivalence between $\boldsymbol{\delta}_1$ and $\boldsymbol{\delta}_2$ (alternatively, $\boldsymbol{\mu}_1$ and $\boldsymbol{\mu}_2$) can be established by using the same method in the proof of Theorem \ref{cx}.
\par Let $f_3(\mathbf{x}) = x_i x_j$, $f_4(\mathbf{x}) = - x_i x_j$ and $f_5(\mathbf{x}) = x_i^2$. Clearly, all of them are componentwise convex for all $1 \leq i < j \leq n$. Thus, we get $\sigma_{1,ii} \leq \sigma_{2,ii}$ for $1 \leq i \leq n$ and $\sigma_{1,ij} = \sigma_{2,ij}$ for $1 \leq i<j \leq n$ by considering (\ref{lse-cov}).
\end{proof}
%%%%%%%%%%%%%%%%%%%%%%%%%%%%%%%%%%%%%%%%%%%%%%%%%%%%%%%%%%%%%%%%%%%%%%%%%%%%%%%%%%%%%%%%%%%%%%%%%%%%%%%%%%%%
%%%%%%%%%%%%%%%%%%%%%%%%%%%%%%%%%%%%%%%%%%%%%%%%%%%%%%%%%%%%%%%%%%%%%%%%%%%%%%%%%%%%%%%%%%%%%%%%%%%%%%%%%%%%
Supermodular orders are important for a wide range of scientific and industrial processes. Several practical applications for supermodular orders, like applications in genetic selection, are presented in \cite{Bauerle1997}. The following result generalizes Theorem 11 in \cite{muller2001stochastic} from the multivariate normal case to the GLSE setting.
\begin{theorem} \label{sm}
Let $\boldsymbol{Y}_1,\boldsymbol{Y}_2$ follow (\ref{Y1Y2-assumption}).
$\boldsymbol{Y}_1 \leq_{sm} \boldsymbol{Y}_2$ if and only if $\boldsymbol{Y}_1$ and $\boldsymbol{Y}_2$ have the same marginals and $\sigma_{1,ij} \leq \sigma_{2,ij}$ for all $1 \leq i < j \leq n$.
\end{theorem}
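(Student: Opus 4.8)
The plan is to prove the equivalence in two steps: the ``if'' part via the integral identity of Lemma~\ref{id-lse}, and the ``only if'' part by feeding a short list of explicit supermodular test functions into the definition of $\leq_{sm}$ and combining the outcome with the covariance formula (\ref{lse-cov}).

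For sufficiency, assume $\boldsymbol{Y}_1,\boldsymbol{Y}_2$ have identical one-dimensional marginals and $\sigma_{1,ij}\leq\sigma_{2,ij}$ for all $1\leq i<j\leq n$. Recalling (\ref{component-sim})--(\ref{component2-sim}), equality of the $i$-th marginals identifies the univariate parameters, giving $\boldsymbol{\mu}_1=\boldsymbol{\mu}_2$, $\boldsymbol{\delta}_1=\boldsymbol{\delta}_2$ and $\sigma_{1,ii}=\sigma_{2,ii}$ for every $i$, so that $\boldsymbol{\Sigma}_2-\boldsymbol{\Sigma}_1$ has a vanishing diagonal. Substituting into Lemma~\ref{id-lse}, the gradient integral vanishes, and since a twice continuously differentiable supermodular $f$ satisfies $\partial^2 f/\partial x_i\partial x_j\geq 0$ for $i\neq j$, the Hessian integrand reduces to
\begin{equation} \nonumber
{\rm tr}\!\left((\boldsymbol{\Sigma}_2-\boldsymbol{\Sigma}_1)H_f(\mathbf{x})\right)=2\sum_{1\leq i<j\leq n}\left(\sigma_{2,ij}-\sigma_{1,ij}\right)\frac{\partial^2}{\partial x_i\,\partial x_j}f(\mathbf{x})\geq 0,
\end{equation}
which yields $Ef(\boldsymbol{Y}_1)\leq Ef(\boldsymbol{Y}_2)$ for every smooth supermodular $f$ of polynomial growth. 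A standard approximation argument, replacing a general supermodular $f$ by smooth, polynomially bounded supermodular functions converging to it and passing to the limit under the expectations, then extends this to all supermodular $f$, i.e. $\boldsymbol{Y}_1\leq_{sm}\boldsymbol{Y}_2$.

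For necessity, assume $\boldsymbol{Y}_1\leq_{sm}\boldsymbol{Y}_2$. Every function of a single coordinate, and its negative, is supermodular (all mixed differences in two distinct coordinates vanish identically), so applying $\leq_{sm}$ to $\pm\mathbf{1}_{\{x_i>t\}}$ shows that $\boldsymbol{Y}_{1,i}$ and $\boldsymbol{Y}_{2,i}$ are equal in law for each $i$; hence $\boldsymbol{Y}_1$ and $\boldsymbol{Y}_2$ have the same marginals, and by identifiability again $\boldsymbol{\mu}_1=\boldsymbol{\mu}_2$, $\boldsymbol{\delta}_1=\boldsymbol{\delta}_2$, $\sigma_{1,ii}=\sigma_{2,ii}$. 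Next, for fixed $i<j$ the function $f(\mathbf{x})=x_ix_j$ is supermodular, so $E(\boldsymbol{Y}_{1,i}\boldsymbol{Y}_{1,j})\leq E(\boldsymbol{Y}_{2,i}\boldsymbol{Y}_{2,j})$; subtracting the equal products of means gives $Cov(\boldsymbol{Y}_{1,i},\boldsymbol{Y}_{1,j})\leq Cov(\boldsymbol{Y}_{2,i},\boldsymbol{Y}_{2,j})$. Reading the $(i,j)$ entry of $Cov(\boldsymbol{Y}_k)$ off (\ref{lse-cov}) and cancelling the rank-one term $Var(\beta(\boldsymbol{Z}))\boldsymbol{\delta}\boldsymbol{\delta}^T$ (legitimate since $\boldsymbol{\delta}_1=\boldsymbol{\delta}_2$), this inequality becomes $-2\psi'(0)E(\alpha^2(\boldsymbol{Z}))(\sigma_{2,ij}-\sigma_{1,ij})\geq 0$; as $\boldsymbol{Y}_1$ is non-degenerate, $-2\psi'(0)>0$ and $E(\alpha^2(\boldsymbol{Z}))>0$, so $\sigma_{1,ij}\leq\sigma_{2,ij}$.

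The verification of supermodularity of the test functions and the covariance bookkeeping are routine; I expect the main obstacle to be the approximation step in the sufficiency direction, since supermodular functions are a priori merely measurable whereas Lemma~\ref{id-lse} requires twice continuous differentiability and polynomial growth, so one must reduce $\leq_{sm}$ to this smaller generating class before applying the identity. A secondary point that must be pinned down is the identifiability of the univariate $LSE$ parameters used to pass from ``same marginals'' to $\boldsymbol{\mu}_1=\boldsymbol{\mu}_2$, $\boldsymbol{\delta}_1=\boldsymbol{\delta}_2$, $\sigma_{1,ii}=\sigma_{2,ii}$; in degenerate cases (e.g. $Var(\beta(\boldsymbol{Z}))=0$, when $\boldsymbol{\delta}$ plays no role) the argument only simplifies.
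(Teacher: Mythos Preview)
Your argument is correct and follows essentially the same approach as the paper: Lemma~\ref{id-lse} for sufficiency, and the test functions $\pm$(single coordinate) and $x_ix_j$ together with (\ref{lse-cov}) for necessity. You are in fact more explicit than the paper, which simply cites the well-known fact that $\leq_{sm}$ forces equal marginals and then invokes Lemma~\ref{id-lse} without spelling out the trace computation or the smoothing/identifiability caveats you flag.
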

\begin{proof}
Suppose $\boldsymbol{Y}_1 \leq_{sm} \boldsymbol{Y}_2$. It can hold only if the random vectors have the same marginals, which means $\boldsymbol{\mu}_1 = \boldsymbol{\mu}_2$, $\boldsymbol{\delta}_1 = \boldsymbol{\delta}_2$ and $\sigma_{1,ii} = \sigma_{2,ii}$ for any $1 \leq i \leq n$. Since the function $f(\mathbf{x}) = x_i x_j$ is supermodular for all $1 \leq i \neq j \leq n$, we see $\boldsymbol{Y}_1 \leq_{sm} \boldsymbol{Y}_2$ implies $\sigma_{1,ij} \leq \sigma_{2,ij}$ for all $1 \leq i \neq j \leq n$. Lemma \ref{id-lse} yields the converse, and hence the result follows.
\end{proof}
From the perspective of correlation, Theorem \ref{sm} can be presented as follows.
\begin{corollary} \label{sm2}
Let $\boldsymbol{Y}_1,\boldsymbol{Y}_2$ follow (\ref{Y1Y2-assumption}), where $\boldsymbol{\Sigma}_1$ and $\boldsymbol{\Sigma}_2$ are correlation matrices. Then $\boldsymbol{Y}_1 \leq_{sm} \boldsymbol{Y}_2$ if and only if $\boldsymbol{\mu}_1 = \boldsymbol{\mu}_2$, $\boldsymbol{\delta}_1 = \boldsymbol{\delta}_2$ and $\sigma_{1,ij} \leq \sigma_{2,ij}$ for all $1 \leq i < j \leq n$.
\end{corollary}
%%%%% ism order which i want to delete%%%%%%%%
\iffalse
\begin{theorem}
Let $\boldsymbol{Y}_1,\boldsymbol{Y}_2$ follows (\ref{Y1Y2-assumption}).
\begin{enumerate}
\item If $\boldsymbol{\mu}_2 + \beta(\mathbf{z}) \boldsymbol{\delta}_2  \geq \boldsymbol{\mu}_1 + \beta(\mathbf{z}) \boldsymbol{\delta}_1$ for all $\mathbf{z}$, $\sigma_{1,ii} = \sigma_{2,ii}$ for all $1 \leq i \leq n$ and $\sigma_{1,ij} \leq \sigma_{2,ij}$ for all $1 \leq i \neq j \leq n$, then $\boldsymbol{Y}_1 \leq_{ism} \boldsymbol{Y}_2$;
\item If $\boldsymbol{Y}_1 \leq_{ism} \boldsymbol{Y}_2$ and $g$ satisfies Assumption \ref{ass1}, then $\boldsymbol{\mu}_1+E\left(\beta(\mathbf{Z})\right)\boldsymbol{\delta}_1 \leq \boldsymbol{\mu}_2+E\left(\beta(\mathbf{Z})\right)\boldsymbol{\delta}_2$ and $\sigma_{1,ii} = \sigma_{2,ii}$ for all $1 \leq i \leq n$.
\item If $\boldsymbol{Y}_1 \leq_{ism} \boldsymbol{Y}_2$, $\boldsymbol{\mu}_1 = \boldsymbol{\mu}_2$ and  $\boldsymbol{\delta}_1 = \boldsymbol{\delta}_2$, then $\sigma_{1,ij} \leq \sigma_{2,ij}$ for all $1 \leq i < j \leq n$.
\end{enumerate}
\end{theorem}
\begin{proof}
1. The proof is routine and will be omitted.
\par 2. $\boldsymbol{Y}_1 \leq_{ism} \boldsymbol{Y}_2$ implies that $\boldsymbol{Y}_{1,i} \leq_{st} \boldsymbol{Y}_{2,i}$, the result can be derived by using Lemma \ref{st-dim1}.
\end{proof}
\fi
%%%%%%%%%%%%%%%%%%%%%%%%%%%%%%%%
%%%%%%%%%%%%%%%%%%%%%%%%%%%%%%%%%%%%%%%%%%%%%%%%%%%%%%%%%%%%%%%%%%%%%%%%%%%%%%%%%%%%%%%%%%%%%%%%%%%%%%%%%%%%
The upper orthant order, given in Definition \ref{def-order}, can also be defined through a comparison of upper orthants, which means that $\boldsymbol{Y}_1 \leq_{uo} \boldsymbol{Y}_2$ if and only if $P(\boldsymbol{Y}_1 > \boldsymbol{t}) \leq P(\boldsymbol{Y}_2 > \boldsymbol{t})$ holds for all $\boldsymbol{t}$. These two definitions can be shown to be equivalent. The following lemma, which is presented in \cite{muller2000some}, provides the fact that there is no difference between the upper orthant order and the supermodular order in the bivariate case.
\begin{lemma} \label{sm=uo}
\citep{muller2000some} Let $\boldsymbol{X}$, $\boldsymbol{Y}$ be two bivariate random vectors and have the same marginals, then $\boldsymbol{X} \leq_{sm} \boldsymbol{Y}$ is equivalent to $\boldsymbol{X} \leq_{uo} \boldsymbol{Y}$.
\end{lemma}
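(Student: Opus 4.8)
The plan is to prove the two implications of the equivalence separately; the implication $\boldsymbol{X} \leq_{sm} \boldsymbol{Y} \Rightarrow \boldsymbol{X} \leq_{uo} \boldsymbol{Y}$ is immediate, and the reverse is the substantive one. For the easy direction, fix $\boldsymbol{t} = (t_1,t_2)$ and consider the indicator $f_{\boldsymbol{t}}(x_1,x_2) = \mathbf{1}_{\{x_1 > t_1\}}\,\mathbf{1}_{\{x_2 > t_2\}}$, which is supermodular because it is a product of two nonnegative nondecreasing univariate functions:
\[ \boldsymbol{\Delta}_1^{\epsilon_1}\boldsymbol{\Delta}_2^{\epsilon_2} f_{\boldsymbol{t}}(\boldsymbol{x}) = \big(\mathbf{1}_{\{x_1+\epsilon_1 > t_1\}} - \mathbf{1}_{\{x_1 > t_1\}}\big)\big(\mathbf{1}_{\{x_2+\epsilon_2 > t_2\}} - \mathbf{1}_{\{x_2 > t_2\}}\big) \geq 0 \]
for all $\epsilon_1,\epsilon_2 \geq 0$. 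Hence $Ef_{\boldsymbol{t}}(\boldsymbol{X}) \leq Ef_{\boldsymbol{t}}(\boldsymbol{Y})$ reads $P(\boldsymbol{X} > \boldsymbol{t}) \leq P(\boldsymbol{Y} > \boldsymbol{t})$, which is $\boldsymbol{X} \leq_{uo} \boldsymbol{Y}$ by the orthant-probability characterisation of $\leq_{uo}$ recalled just above the lemma. (This half uses neither the bivariate dimension nor the equal-marginals hypothesis.)

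For the converse $\boldsymbol{X} \leq_{uo} \boldsymbol{Y} \Rightarrow \boldsymbol{X} \leq_{sm} \boldsymbol{Y}$, I would first reduce, by a truncation-and-mollification argument, to a supermodular $f \in C^2$ whose mixed second derivative $\partial^2 f/\partial x_1\partial x_2$ is bounded and integrable against the joint survival functions of $\boldsymbol{X}$ and $\boldsymbol{Y}$; mollification preserves supermodularity since convolution with a nonnegative kernel commutes with the difference operators $\boldsymbol{\Delta}_i^{\epsilon}$, and the limit is controlled by monotone/dominated convergence. For such $f$, fix a base point $a$ and use the bivariate fundamental theorem of calculus
\[ f(x_1,x_2) = f(x_1,a) + f(a,x_2) - f(a,a) + \int_a^{x_1}\!\!\int_a^{x_2} \frac{\partial^2 f}{\partial u\,\partial v}(u,v)\, dv\, du . \]
Taking expectations and applying Fubini, the first three terms depend only on the marginal laws of each vector and therefore cancel in $Ef(\boldsymbol{Y}) - Ef(\boldsymbol{X})$ because $\boldsymbol{X}$ and $\boldsymbol{Y}$ have the same marginals; re-expressing the four orthant probabilities that arise from the signed region of the double integral through the joint survival function and the (common) marginals, everything collapses to
\[ Ef(\boldsymbol{Y}) - Ef(\boldsymbol{X}) = \int_{\mathbb{R}^2} \frac{\partial^2 f}{\partial u\,\partial v}(u,v)\,\big[\,\overline{G}_{\boldsymbol{Y}}(u,v) - \overline{G}_{\boldsymbol{X}}(u,v)\,\big]\, du\, dv, \]
with $\overline{G}_{\boldsymbol{X}}(u,v) = P(X_1>u,X_2>v)$ and similarly for $\boldsymbol{Y}$. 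Supermodularity gives $\partial^2 f/\partial u\,\partial v \geq 0$ (the derivative characterisation of supermodularity stated in the Remark above) and $\boldsymbol{X} \leq_{uo} \boldsymbol{Y}$ gives $\overline{G}_{\boldsymbol{Y}} \geq \overline{G}_{\boldsymbol{X}}$ pointwise, so the integrand is nonnegative and $Ef(\boldsymbol{X}) \leq Ef(\boldsymbol{Y})$.

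I expect the main obstacle to be the regularity/approximation step rather than the algebra: a general supermodular function need not be continuous, so the representation identity has to be established on a dense subclass of smooth supermodular functions and transferred by a limiting argument, all while keeping enough integrability that the Fubini interchange is legitimate and the boundary contributions $f(x_1,a)$, $f(a,x_2)$ — precisely the terms killed by the equal-marginals hypothesis — stay finite. An alternative route that sidesteps differentiability is to discretise: restrict $f$ to a finite grid, where a supermodular function decomposes as a nonnegative combination of the ``elementary'' supermodular upper-orthant indicators $\mathbf{1}_{\{x_1 > t_1\}}\mathbf{1}_{\{x_2 > t_2\}}$ plus one-dimensional additive terms, so that $\leq_{uo}$ on the grid together with equal marginals forces $Ef(\boldsymbol{X}) \leq Ef(\boldsymbol{Y})$ directly; one then lets the mesh shrink, trading the smoothness issue for a grid-approximation estimate.
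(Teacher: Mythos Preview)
The paper does not supply its own proof of this lemma; it is stated with attribution to M\"uller and Scarsini \cite{muller2000some} and used as a black box in the subsequent theorem. So there is no in-paper argument to compare against.

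That said, your proposal is sound and is essentially the classical route. The forward direction is exactly right: upper-orthant indicators are supermodular, so $\leq_{sm}$ forces the survival-function inequality. For the reverse, the identity
\[
Ef(\boldsymbol{Y})-Ef(\boldsymbol{X})=\int_{\mathbb{R}^2}\frac{\partial^2 f}{\partial u\,\partial v}(u,v)\bigl[\overline{G}_{\boldsymbol{Y}}(u,v)-\overline{G}_{\boldsymbol{X}}(u,v)\bigr]\,du\,dv
\]
is the bivariate Hoeffding representation, and your observation that the equal-marginals hypothesis is precisely what kills the separable terms $f(x_1,a)+f(a,x_2)-f(a,a)$ is the heart of the matter. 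The regularity concerns you raise are genuine but standard; your alternative grid route---decomposing a discrete supermodular function as a nonnegative combination of orthant indicators plus additive one-variable pieces---is in fact closer in spirit to how the original reference (building on Tchen's 1980 work) handles the general case, and avoids the mollification bookkeeping entirely.
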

The inequality version of Lemma \ref{sm=uo} can be found in \cite{tong} and \cite{Ruschendorf}. The following theorem provides conditions for comparing GLSE distributed vectors under the upper orthant order. The upper orthant order can be equivalently defined by requiring $\overline{F}_X(\mathbf{t}) \leq \overline{F}_Y(\mathbf{t})$ for all $\mathbf{t} \in \mathbb{R}^n$.
\begin{theorem} \label{uo}
Let $\boldsymbol{Y}_1,\boldsymbol{Y}_2$ follow (\ref{Y1Y2-assumption}). The following statements are true:
\begin{enumerate}
\item If $\boldsymbol{\mu}_2 + \beta(\mathbf{z}) \boldsymbol{\delta}_2  \geq \boldsymbol{\mu}_1 + \beta(\mathbf{z}) \boldsymbol{\delta}_1$ for all $\mathbf{z}$, $\sigma_{1,ii} = \sigma_{2,ii}$ for all $1 \leq i \leq n$ and $\sigma_{1,ij} \leq \sigma_{2,ij}$ for all $1 \leq i \neq j \leq n$, then $\boldsymbol{Y}_1 \leq_{uo} \boldsymbol{Y}_2$.
\item If $\boldsymbol{Y}_1 \leq_{uo} \boldsymbol{Y}_2$ and $g$ satisfies Assumption \ref{ass1}, then $\boldsymbol{\mu}_1+E\left(\beta(\mathbf{Z})\right)\boldsymbol{\delta}_1 \leq \boldsymbol{\mu}_2+E\left(\beta(\mathbf{Z})\right)\boldsymbol{\delta}_2$ and $\sigma_{1,ii} = \sigma_{2,ii}$ for all $1 \leq i \leq n$.
\item If $\boldsymbol{Y}_1$ and $\boldsymbol{Y}_2$ have the same marginals and $\boldsymbol{Y}_1 \leq_{uo} \boldsymbol{Y}_2$, then $\sigma_{1,ij} \leq \sigma_{2,ij}$ for all $1 \leq i < j \leq n$.
\end{enumerate}
\end{theorem}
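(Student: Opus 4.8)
The plan is to treat the three parts by separate tools: part~(1) from the identity of Lemma~\ref{id-lse} together with the derivative characterisation of $\boldsymbol{\Delta}$-monotone functions; part~(2) by restricting the order to single coordinates and invoking Lemma~\ref{st-dim1}(2); and part~(3) by restricting to coordinate pairs, turning the resulting bivariate upper orthant order into a supermodular order via Lemma~\ref{sm=uo}, and reading off the covariance inequality from (\ref{lse-cov}). For part~(1), let $f$ be $\boldsymbol{\Delta}$-monotone; by the Remark after the definition, $\partial f/\partial x_i\ge 0$ for all $i$ and $\partial^2 f/\partial x_i\partial x_j\ge 0$ for all $i\ne j$, while no sign is imposed on the pure second derivatives. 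Substituting into Lemma~\ref{id-lse}, the gradient integrand $\big(\boldsymbol{\mu}_2-\boldsymbol{\mu}_1+\beta(\mathbf{z})(\boldsymbol{\delta}_2-\boldsymbol{\delta}_1)\big)'\nabla f(\mathbf{x})$ is nonnegative because the bracketed vector is $\ge 0$ componentwise for every $\mathbf{z}$ by hypothesis; and since $\sigma_{1,ii}=\sigma_{2,ii}$ while $\sigma_{1,ij}\le\sigma_{2,ij}$ for $i\ne j$, the matrix $\boldsymbol{\Sigma}_2-\boldsymbol{\Sigma}_1$ has zero diagonal and nonnegative off-diagonal entries, so ${\rm tr}\big((\boldsymbol{\Sigma}_2-\boldsymbol{\Sigma}_1)H_f(\mathbf{x})\big)=\sum_{i\ne j}(\sigma_{2,ij}-\sigma_{1,ij})\,\partial^2 f/\partial x_i\partial x_j\ge 0$. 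The key point is that the (uncontrolled) diagonal of $H_f$ never appears, which is exactly why the hypothesis must impose equality, not merely inequality, of the diagonal scale entries; Lemma~\ref{id-lse} then gives $Ef(\boldsymbol{Y}_1)\le Ef(\boldsymbol{Y}_2)$ for every $\boldsymbol{\Delta}$-monotone $f$, i.e. $\boldsymbol{Y}_1\le_{uo}\boldsymbol{Y}_2$.

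For part~(2), restricting $\boldsymbol{Y}_1\le_{uo}\boldsymbol{Y}_2$ to the $i$-th coordinate and using that on $\mathbb{R}$ the upper orthant order coincides with $\le_{st}$ gives $\boldsymbol{Y}_{1,i}\le_{st}\boldsymbol{Y}_{2,i}$ for each $i$. By (\ref{component-sim})--(\ref{component2-sim}) these marginals are $LSE_1(\mu_{k,i},\sigma_{k,ii},\delta_{k,i},g,\alpha,\beta,H)$ and $g$ satisfies Assumption~\ref{ass1}, so Lemma~\ref{st-dim1}(2) yields $\mu_{1,i}+E(\beta(\boldsymbol{Z}))\delta_{1,i}\le\mu_{2,i}+E(\beta(\boldsymbol{Z}))\delta_{2,i}$ and $\sigma_{1,ii}=\sigma_{2,ii}$; collecting these over $i=1,\dots,n$ proves the two asserted conclusions.

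For part~(3), the same restriction principle (a $\boldsymbol{\Delta}$-monotone function of $(x_i,x_j)$ lifts to a $\boldsymbol{\Delta}$-monotone function of $\mathbf{x}$) gives $(\boldsymbol{Y}_{1,i},\boldsymbol{Y}_{1,j})\le_{uo}(\boldsymbol{Y}_{2,i},\boldsymbol{Y}_{2,j})$ for every $i<j$. Since $\boldsymbol{Y}_1$ and $\boldsymbol{Y}_2$ have the same marginals, these two bivariate vectors share their one-dimensional marginals, so Lemma~\ref{sm=uo} upgrades the order to $(\boldsymbol{Y}_{1,i},\boldsymbol{Y}_{1,j})\le_{sm}(\boldsymbol{Y}_{2,i},\boldsymbol{Y}_{2,j})$. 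Testing against the supermodular map $(u,v)\mapsto uv$ gives $E(\boldsymbol{Y}_{1,i}\boldsymbol{Y}_{1,j})\le E(\boldsymbol{Y}_{2,i}\boldsymbol{Y}_{2,j})$, and since the means of the coordinates agree (same marginals) this becomes $Cov(\boldsymbol{Y}_1)_{ij}\le Cov(\boldsymbol{Y}_2)_{ij}$. Finally, formula (\ref{lse-cov}) with $\boldsymbol{\delta}_1=\boldsymbol{\delta}_2$ reduces the latter to $-2\psi'(0)\,E(\alpha^2(\boldsymbol{Z}))\,\sigma_{1,ij}\le-2\psi'(0)\,E(\alpha^2(\boldsymbol{Z}))\,\sigma_{2,ij}$, and dividing by the positive scalar $-2\psi'(0)\,E(\alpha^2(\boldsymbol{Z}))$ gives $\sigma_{1,ij}\le\sigma_{2,ij}$ for all $i<j$.

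The step I expect to be the main obstacle is the rigorous application of Lemma~\ref{id-lse} in part~(1): a generic $\boldsymbol{\Delta}$-monotone $f$ is in general neither twice continuously differentiable nor of the polynomial growth that lemma requires, and $Ef(\boldsymbol{Y}_k)$ may be infinite. I would resolve this as in M\"uller~\cite{muller2001stochastic}: first establish the inequality for smooth, appropriately bounded $\boldsymbol{\Delta}$-monotone $f$, for which Lemma~\ref{id-lse} applies directly, and then extend to the full generating class by mollification, exploiting the equivalence recorded just before the theorem, namely that $\boldsymbol{Y}_1\le_{uo}\boldsymbol{Y}_2$ is the same as $P(\boldsymbol{Y}_1>\mathbf{t})\le P(\boldsymbol{Y}_2>\mathbf{t})$ for all $\mathbf{t}$, so that it suffices to handle (smoothings of) indicators of upper orthants.
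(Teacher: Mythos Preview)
Your proposal is correct and follows essentially the same route as the paper: Lemma~\ref{id-lse} for part~(1), marginal restriction plus Lemma~\ref{st-dim1} for part~(2), and bivariate restriction plus Lemma~\ref{sm=uo} for part~(3). The only cosmetic difference is that in part~(3) the paper invokes Theorem~\ref{sm} directly rather than unpacking the supermodular test function $(u,v)\mapsto uv$ and formula~(\ref{lse-cov}) as you do; your added remark on smoothing $\boldsymbol{\Delta}$-monotone functions to meet the hypotheses of Lemma~\ref{id-lse} is a point of rigour the paper itself leaves implicit.
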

\begin{proof}
1. For any $\boldsymbol{\Delta}$-monotone function $f$, $\nabla f(\mathbf{x}) \geq 0$ and the  off-diagonal elements in $\mathbf{H}_f(\mathbf{x})$ are greater than $0$. Then the results can be derived by using Lemma \ref{id-lse}.
\par 2.  $\boldsymbol{Y}_1 \leq_{uo} \boldsymbol{Y}_2$ implies their components $\boldsymbol{Y}_{1,i} \leq_{st} \boldsymbol{Y}_{2,i}$ for all $1 \leq i \leq n$. Note that $\boldsymbol{Y}_1,\boldsymbol{Y}_2$ following (\ref{Y1Y2-assumption}) leads to (\ref{component-sim}) and (\ref{component2-sim}). Then the desired results can be proved by applying Lemma \ref{st-dim1}.
\par 3. $\boldsymbol{Y}_1 \leq_{uo} \boldsymbol{Y}_2$ implies that $\left( \boldsymbol{Y}_{1,i},\boldsymbol{Y}_{1,j}\right)^T \leq_{uo} \left( \boldsymbol{Y}_{1,i},\boldsymbol{Y}_{1,j}\right)^T$, where $1 \leq i < j \leq n$ and it is quite obvious that $\left( \boldsymbol{Y}_{1,i},\boldsymbol{Y}_{1,j}\right)^T$ and $\left( \boldsymbol{Y}_{2,i},\boldsymbol{Y}_{2,j}\right)^T$ have the same marginals. It can be derived from Lemma \ref{sm=uo} that $\left( \boldsymbol{Y}_{1,i},\boldsymbol{Y}_{1,j}\right)^T \leq_{sm} \left( \boldsymbol{Y}_{1,i},\boldsymbol{Y}_{1,j}\right)^T$. Then the required result follows from Theorem \ref{sm}.
\end{proof}
At the end of this section, we will consider the copositive and completely positive orders for random vectors following multivariate GLSE distribution. The multivariate normal and elliptical cases can be found in \cite{arlotto2009hessian} and \cite{yin2019stochastic}.
\begin{theorem} \label{cp}
Let $\boldsymbol{Y}_1,\boldsymbol{Y}_2$ follow (\ref{Y1Y2-assumption}). The following statements are true:
\begin{enumerate}
\item If $\boldsymbol{\mu}_1 = \boldsymbol{\mu}_2$, $\boldsymbol{\delta}_1 = \boldsymbol{\delta}_2$ and $\boldsymbol{\Sigma}_2 - \boldsymbol{\Sigma}_1$ is copositive, then $\boldsymbol{Y}_1 \leq_{cp} \boldsymbol{Y}_2$.
\item If $\boldsymbol{\mu}_1 = \boldsymbol{\mu}_2$, then $\boldsymbol{Y}_1 \leq_{cp} \boldsymbol{Y}_2$, if and only if $\boldsymbol{\delta}_1 = \boldsymbol{\delta}_2$ and $\boldsymbol{\Sigma}_2 - \boldsymbol{\Sigma}_1$ is copositive.
\item If $\boldsymbol{\delta}_1 = \boldsymbol{\delta}_2$, then $\boldsymbol{Y}_1 \leq_{cp} \boldsymbol{Y}_2$, if and only if $\boldsymbol{\mu}_1 = \boldsymbol{\mu}_2$ and $\boldsymbol{\Sigma}_2 - \boldsymbol{\Sigma}_1$ is copositive.
\end{enumerate}
\end{theorem}
\begin{proof}
1. For any function $f$ such that $\mathbf{H}_f(\mathbf{x}) \in \mathcal{C}_{cp}$, using Lemma \ref{id-lse}, together with (\ref{rela-cpcop}), it yields $Ef(\boldsymbol{Y}_1) \leq f(\boldsymbol{Y}_2)$, which means that $\boldsymbol{Y}_1 \leq_{cp} \boldsymbol{Y}_2$.
\par 2.\& 3.
Note that the Hessian matrices of functions $f_{1,i}(\mathbf{x}) = x_i$ and $f_{2,i}(\mathbf{x}) = - x_i $ are completely positive for all $1 \leq i \leq n$. Thus, $\boldsymbol{\mu}_2 + E\beta(\boldsymbol{Z}) \boldsymbol{\delta}_2  = \boldsymbol{\mu}_1 + E\beta(\boldsymbol{Z}) \boldsymbol{\delta}_1$ can be derived by setting $f=f_{1,i}$ and $f=f_{2,i}$ in Definition \ref{def-order}. If we know $\boldsymbol{\mu}_1 = \boldsymbol{\mu}_2$, then $\boldsymbol{\delta}_1 = \boldsymbol{\delta}_2$ can be obtained as well and vice versa. For any symmetric $n \times n$ matrix $\mathbf{A} \in \mathcal{C}_{cp}$, let
\begin{equation}\nonumber
f_{6}(\mathbf{x}) = \frac{1}{2}(\mathbf{x} - E\boldsymbol{Y}_1)^T\mathbf{A}(\mathbf{x} - E\boldsymbol{Y}_1).
\end{equation}
Notice the fact that the Hessian matrices of $f_{6}$ are $\mathbf{A}$ for all $\mathbf{x} \in \mathbb{R}^n$. To obtain the desired result, it suffices to show that
\begin{equation} \label{exp-f6}
E\left( (\boldsymbol{Y}_1 - E\boldsymbol{Y}_1)^T\mathbf{A}(\boldsymbol{Y}_1 - E\boldsymbol{Y}_1)\right) \leq E \left( (\boldsymbol{Y}_2 - E\boldsymbol{Y}_2)^T\mathbf{A}(\boldsymbol{Y}_2 - E\boldsymbol{Y}_2)\right),
\end{equation}
which can be obtained by setting $f=f_{6}$ in Definition \ref{def-order}.
The inequality (\ref{exp-f6}), together with (\ref{lse-cov}), allows us to get ${\rm tr}\left( (\boldsymbol{\Sigma}_2 - \boldsymbol{\Sigma}_1) \mathbf{A}\right) \geq 0$. Since $\mathbf{A} \in \mathcal{C}_{cp}$ is arbitrarily chosen, we conclude that $\boldsymbol{\Sigma}_2 - \boldsymbol{\Sigma}_1 \in \mathcal{C}_{cp}^*$, i.e. $\boldsymbol{\Sigma}_2 - \boldsymbol{\Sigma}_1$ is copositive.
\end{proof}
%%%%%%%%%%%%%%%%%%%%%%%%%%%%%%%%%%%%%%%%%%%%%%%%%%%%%%%%%%%%%%%%%%%%%%%%%%%%%%%%%%%%%%%%%%%%%%%%%%%%%%%%%%%%
\begin{theorem} \label{cop}
Let $\boldsymbol{Y}_1,\boldsymbol{Y}_2$ follow (\ref{Y1Y2-assumption}). The following statements are true:
\begin{enumerate}
\item If $\boldsymbol{\mu}_1 = \boldsymbol{\mu}_2$, $\boldsymbol{\delta}_1 = \boldsymbol{\delta}_2$ and $\boldsymbol{\Sigma}_2 - \boldsymbol{\Sigma}_1$ is completely positive, then $\boldsymbol{Y}_1 \leq_{cop} \boldsymbol{Y}_2$.
\item If $\boldsymbol{\mu}_1 = \boldsymbol{\mu}_2$, then $\boldsymbol{Y}_1 \leq_{cop} \boldsymbol{Y}_2$, if and only if $\boldsymbol{\delta}_1 = \boldsymbol{\delta}_2$ and $\boldsymbol{\Sigma}_2 - \boldsymbol{\Sigma}_1$ is completely positive.
\item If $\boldsymbol{\delta}_1 = \boldsymbol{\delta}_2$, then $\boldsymbol{Y}_1 \leq_{cop} \boldsymbol{Y}_2$, if and only if $\boldsymbol{\mu}_1 = \boldsymbol{\mu}_2$ and $\boldsymbol{\Sigma}_2 - \boldsymbol{\Sigma}_1$ is completely positive.
\end{enumerate}
\end{theorem}
\begin{proof}
  1. For any function $f$ such that $\mathbf{H}_f(\mathbf{x}) \in \mathcal{C}_{cop}$, it follows from Lemma \ref{id-lse} and (\ref{rela-cpcop}) that $Ef(\boldsymbol{Y}_1) \leq f(\boldsymbol{Y}_2)$, which means that $\boldsymbol{Y}_1 \leq_{cp} \boldsymbol{Y}_2$.
  \par 2.\& 3.
  Note that the Hessian matrices of functions $f_{1,i}(\mathbf{x}) = x_i$ and $f_{2,i}(\mathbf{x}) = - x_i $ are completely positive for all $1 \leq i \leq n$. Thus, $\boldsymbol{\mu}_2 + E\beta(\boldsymbol{Z}) \boldsymbol{\delta}_2  = \boldsymbol{\mu}_1 + E\beta(\boldsymbol{Z}) \boldsymbol{\delta}_1$ can be derived by setting $f=f_{1,i}$ and $f=f_{2,i}$ in Definition \ref{def-order}. If we know $\boldsymbol{\mu}_1 = \boldsymbol{\mu}_2$, then $\boldsymbol{\delta}_1 = \boldsymbol{\delta}_2$ can be obtained as well and vice versa. For any symmetric $n \times n$ matrix $\mathbf{A} \in \mathcal{C}_{cop}$, let
  \begin{equation} \nonumber
  f_{7}(\mathbf{x}) = \frac{1}{2}(\mathbf{x} - E\boldsymbol{Y}_1)^T\mathbf{A}(\mathbf{x} - E\boldsymbol{Y}_1).
  \end{equation}
  Notice the fact that the Hessian matrices of $f_{7}$ are $\mathbf{A}$ for all $\mathbf{x} \in \mathbb{R}^n$. To obtain the desired result, it suffices to show that
  \begin{equation} \label{exp-f7}
  E\left( (\boldsymbol{Y}_1 - E\boldsymbol{Y}_1)^T\mathbf{A}(\boldsymbol{Y}_1 - E\boldsymbol{Y}_1)\right) \leq E \left( (\boldsymbol{Y}_2 - E\boldsymbol{Y}_2)^T\mathbf{A}(\boldsymbol{Y}_2 - E\boldsymbol{Y}_2)\right),
  \end{equation}
  which indeed can be obtained by setting $f=f_{7}$ in Definition \ref{def-order}.
  The inequality (\ref{exp-f7}), together with (\ref{lse-cov}), allows us to get ${\rm tr}\left( (\boldsymbol{\Sigma}_2 - \boldsymbol{\Sigma}_1) \mathbf{A}\right) \geq 0$. Since $\mathbf{A} \in \mathcal{C}_{cop}$ is arbitrarily chosen, we conclude that $\boldsymbol{\Sigma}_2 - \boldsymbol{\Sigma}_1 \in \mathcal{C}_{cop}^*$, i.e. $\boldsymbol{\Sigma}_2 - \boldsymbol{\Sigma}_1$ is completely positive.
\end{proof}
\section{Some Applications} \label{Applications}
In this part, we present some inequalities for certain functions of GLSE random variables, which can be proven by applying previous results. These inequalities are pretty valuable not only in extending the well-known Slepian's theorem to a general setting, but also in some actuarial practices for comparing the aggregate risks and the maximum claim amounts of two insurance portfolios.
\subsection{Extension of Slepian's Theorem}
The  Slepian's theorem is widely used in reliability theory, extreme value theory and pure probability. It was first introduced and proven by \cite{Slepian1962}, and used to compare tail behaviors of two normal distributions.  \cite{Gupta1972} generalized Slepian's theorem to the elliptical distributions. Extensions on Slepian's theorem for multivariate normal distributions with nonsingular covariance matrix and mean-variance mixtures of normal distributions can be found in \cite{topkis} and \cite{jamali2020integral}, respectively.  The class of GLSE distributions contains many important distributions as special cases; as a result, the following result is of high applicability and develops a generalization of Slepian's theorem for GLSE distributions, which is an  immediate consequence of Theorems \ref{sm} and \ref{uo}.
\begin{corollary}
Let $\boldsymbol{Y}_1,\boldsymbol{Y}_2$ follow (\ref{Y1Y2-assumption}). If $\boldsymbol{\mu}_1 \leq \boldsymbol{\mu}_2$, $\boldsymbol{\delta}_1 \leq \boldsymbol{\delta}_2$, $\sigma_{1,ii} = \sigma_{2,ii}$ for all $1 \leq i \leq n$ and $\sigma_{1,ij} \leq \sigma_{2,ij}$ for all $1 \leq i \neq j \leq n$, then
\begin{equation} \nonumber
P \left(\boldsymbol{Y}_1 > \boldsymbol{a}\right) \leq P \left(\boldsymbol{Y}_2 > \boldsymbol{a}\right)
\end{equation}
holds for all $\boldsymbol{a} \in \mathbb{R}^n$. Furthermore, the foregoing inequality is strict if $\boldsymbol{\mu}_1 < \boldsymbol{\mu}_2$, $\boldsymbol{\delta}_1 < \boldsymbol{\delta}_2$, and $\sigma_{1,ij} < \sigma_{2,ij}$ for all $1 \leq i \neq j \leq n$.
\end{corollary}
\subsection{Applications in Risk Models}
In this subsection, we provide some applications of the theoretical findings in some insurance scenarios of actuarial science. We shall consider three important quantities in individual and collective risk models including the aggregate claim amount, the maximum claim amount, and the Gini index. For discussions and stochastic orderings of three important quantities, we refer to \cite{zhang2015}, \cite{zhang2019}, \cite{Samanthi2016}, and \cite{Amiri2022Hessian}.
\par The aggregate claim amount in a particular time period is a quantity of fundamental importance for proper management of an insurance company in pricing insurance coverages. Given the claim amount of the $i$-th insurance contract $\boldsymbol{X}_{i}$ and the respective weights, $\alpha_i$, of the $i$-th insurance contract, for $i \in \lbrace 1, 2, \cdots, n\rbrace$, the individual risk model sets $\boldsymbol{S} = \sum_{i=1}^n \alpha_{i} \boldsymbol{X}_{i}$ as the aggregate risk. Consider two insurance portfolios with claims $\boldsymbol{Y}_1$ and $\boldsymbol{Y}_2$ assembled with same weights $\{\alpha_i, i=1,\ldots,n\}$. Let $\boldsymbol{S}_1 = \sum_{i=1}^n \alpha_{i} \boldsymbol{Y}_{1,i}$ and $\boldsymbol{S}_2 = \sum_{i=1}^n \alpha_{i} \boldsymbol{Y}_{2,i}$ be the aggregate risks of the two insurance portfolios. The following result is a direct consequence of Theorem \ref{st-dimn}, providing sufficient conditions for comparing the aggregated claims in individual risk model.
\begin{corollary} \label{nohatS}
Assume $\boldsymbol{Y}_1,\boldsymbol{Y}_2$ follow (\ref{Y1Y2-assumption}), then
\begin{enumerate}
  \item If $\boldsymbol{\mu}_2 + \beta(\mathbf{z}) \boldsymbol{\delta}_2  \geq \boldsymbol{\mu}_1 + \beta(\mathbf{z}) \boldsymbol{\delta}_1$ for all $\mathbf{z}$ and $\boldsymbol{\Sigma}_1 = \boldsymbol{\Sigma}_2$, then $\boldsymbol{S}_1 \leq_{st} \boldsymbol{S}_2$.
  \item If $\boldsymbol{\mu}_2 + \beta(\mathbf{z}) \boldsymbol{\delta}_2  \geq \boldsymbol{\mu}_1 + \beta(\mathbf{z}) \boldsymbol{\delta}_1$ for all $\mathbf{z}$ and $\boldsymbol{\Sigma}_2 - \boldsymbol{\Sigma}_1$ is positive semi-definite, then $\boldsymbol{S}_1 \leq_{icx} \boldsymbol{S}_2$.
\end{enumerate}
\end{corollary}
\begin{proof}
\begin{enumerate}
\item If $\boldsymbol{\mu}_2 + \beta(\mathbf{z}) \boldsymbol{\delta}_2  \geq \boldsymbol{\mu}_1 + \beta(\mathbf{z}) \boldsymbol{\delta}_1$ for all $\mathbf{z}$ and $\boldsymbol{\Sigma}_1 = \boldsymbol{\Sigma}_2$, then $\boldsymbol{Y}_1 \leq_{st} \boldsymbol{Y}_2$. Notice that $g(\mathbf{x}) = \sum_{i=1}^n \alpha_{i} X_{i}$ is increasing, so $f \circ g$ is increasing for all increasing $f$ as well. Then we have $Ef \circ g(\boldsymbol{Y}_1) \leq Ef \circ g(\boldsymbol{Y}_2)$, which can easily establish $\boldsymbol{S}_1 \leq_{st} \boldsymbol{S}_2$.
\item Notice that $g(\mathbf{x}) = \sum_{i=1}^n \alpha_{i} X_{i}$ is increasing and convex, so $f \circ g$ is increasing and convex for all increasing convex $f$ as well. The desired result can be established by applying the same method.
\end{enumerate}
\end{proof}

%It is easy to observed that the main difference between the individual risk model and the collective risk model is the randomness of the number of policies in the portfolio.

Besides the individual risk model, the collective risk model is a frequently-used tool to represent the aggregate risk of an insurance portfolio with random number of risks. The collective risk model considers $\hat{\boldsymbol{S}} = \sum_{i=1}^N \alpha_{i} \boldsymbol{X}_{i}$ as the aggregated risk of the insurance portfolio, where $\boldsymbol{N}$ is a counting random variable representing the number of policies in the portfolio which is independent of $\lbrace \boldsymbol{X}_{i}, i \in \lbrace 1, 2, \cdots, n\rbrace \rbrace$. Let $\hat{\boldsymbol{S}}_1 = \sum_{i=1}^{\boldsymbol{N}_1} \alpha_{i} \boldsymbol{Y}_{1,i}$ and $\hat{\boldsymbol{S}}_2 = \sum_{i=1}^{\boldsymbol{N}_2} \alpha_{i} \boldsymbol{Y}_{2,i}$ be the aggregate risks from two sets of insurance portfolios. The following corollary provides sufficient conditions for comparing the aggregated risk under the collective risk model.
\begin{corollary} \label{hatS}
Assume $\boldsymbol{Y}_1,\boldsymbol{Y}_2$ follow (\ref{Y1Y2-assumption}), random quantity $\boldsymbol{N}_1$ is independent of $\lbrace \boldsymbol{X}_{i}, i \in \lbrace 1, 2, \cdots, n\rbrace \rbrace$, random quantity $\boldsymbol{N}_2$ is independent of $\lbrace \boldsymbol{Y}_{i}, i \in \lbrace 1, 2, \cdots, n\rbrace \rbrace$, and $\boldsymbol{N}_1 \leq_{st} \boldsymbol{N}_2$, then
\begin{enumerate}
\item If $\boldsymbol{\mu}_2 + \beta(\mathbf{z}) \boldsymbol{\delta}_2  \geq \boldsymbol{\mu}_1 + \beta(\mathbf{z}) \boldsymbol{\delta}_1$ for all $\mathbf{z}$ and $\boldsymbol{\Sigma}_1 = \boldsymbol{\Sigma}_2$, then $\hat{\boldsymbol{S}}_1 \leq_{st} \hat{\boldsymbol{S}}_2$.
\item If $\boldsymbol{\mu}_2 + \beta(\mathbf{z}) \boldsymbol{\delta}_2  \geq \boldsymbol{\mu}_1 + \beta(\mathbf{z}) \boldsymbol{\delta}_1$ for all $\mathbf{z}$ and $\boldsymbol{\Sigma}_2 - \boldsymbol{\Sigma}_1$ is positive semi-definite, then $\hat{\boldsymbol{S}}_1 \leq_{icx} \hat{\boldsymbol{S}}_2$.
\end{enumerate}
\end{corollary}
\begin{proof} The proof can be easily established by combining Property 3.3.31 in \cite{denuit2006actuarial} and Corollary \ref{nohatS}.
\end{proof}
%Besides aggregated claims and collective risks, comparing maximum claims is another critical problem in actuarial practice. Given $\boldsymbol{Y}_1,\boldsymbol{Y}_2$ follow (\ref{Y1Y2-assumption}) and consider them as claims from two insurance contracts,
We next consider comparing the maximum claim amounts from the two insurance portfolios $\boldsymbol{Y}_1$ and $\boldsymbol{Y}_2$. Denote $\boldsymbol{M}_1 = \max \lbrace \boldsymbol{Y}_{1,1},\boldsymbol{Y}_{1,2}, \cdots, \boldsymbol{Y}_{1,n}\rbrace$, and $\boldsymbol{M}_2 = \max \lbrace \boldsymbol{Y}_{2,1},\boldsymbol{Y}_{2,2}, \cdots, \boldsymbol{Y}_{2,n}\rbrace$. The following result establishes sufficient conditions for the usual stochastic order between $\boldsymbol{M}_1$ and $\boldsymbol{M}_2$.
\begin{corollary}
If $\boldsymbol{\mu}_2 + \beta(\mathbf{z}) \boldsymbol{\delta}_2 \geq \boldsymbol{\mu}_1 + \beta(\mathbf{z}) \boldsymbol{\delta}_1$ for all $\mathbf{z}$ and $\boldsymbol{\Sigma}_1 = \boldsymbol{\Sigma}_2$, then $\boldsymbol{M}_1 \leq_{st} \boldsymbol{M}_2$.
\begin{proof}
Conditions $\boldsymbol{\mu}_2 + \beta(\mathbf{z}) \boldsymbol{\delta}_2 \geq \boldsymbol{\mu}_1 + \beta(\mathbf{z}) \boldsymbol{\delta}_1$ for all $\mathbf{z}$ and $\boldsymbol{\Sigma}_1 = \boldsymbol{\Sigma}_2$ imply $\boldsymbol{Y}_1 \leq_{st} \boldsymbol{Y}_2$ by Theorem \ref{st-dimn}. Notice that $g(\mathbf{x}) = \max \lbrace x_1, x_2, \cdots, x_n \rbrace$ is increasing, so $f \circ g$ is increasing for all increasing $f$ as well. Then we have $Ef \circ g(\boldsymbol{Y}_1) \leq Ef \circ g(\boldsymbol{Y}_2)$, which implies $\boldsymbol{M}_1 \leq_{st} \boldsymbol{M}_2$.
\end{proof}
\iffalse
\begin{enumerate}
\item If $\boldsymbol{\mu}_2 + \beta(\mathbf{z}) \boldsymbol{\delta}_2  \geq \boldsymbol{\mu}_1 + \beta(\mathbf{z}) \boldsymbol{\delta}_1$ for all $\mathbf{z}$ and $\boldsymbol{\Sigma}_1 = \boldsymbol{\Sigma}_2$, then $\boldsymbol{M}_1 \leq_{st} \boldsymbol{M}_2$.
\item If $\boldsymbol{\mu}_2 + \beta(\mathbf{z}) \boldsymbol{\delta}_2  \geq \boldsymbol{\mu}_1 + \beta(\mathbf{z}) \boldsymbol{\delta}_1$ for all $\mathbf{z}$ and $\boldsymbol{\Sigma}_2 - \boldsymbol{\Sigma}_1$ is PSD, then $\boldsymbol{M}_1 \leq_{icx} \boldsymbol{M}_2$, i.e. $E[\boldsymbol{S}_1 - t]_+ \leq E[\boldsymbol{S}_2 - t]_+$ for all $t \in \mathbb{R}$.
\end{enumerate}
\fi
\end{corollary}
For a random vector $\boldsymbol{X} = (\boldsymbol{X}_1, \boldsymbol{X}_2, \cdots, \boldsymbol{X}_n)$, the Gini index is defined as
\begin{equation} \label{gini-def}
G_n(\boldsymbol{X}) = \frac{1}{n^2} \sum_{i,j=1}^n |\boldsymbol{X}_i - \boldsymbol{X}_j|,
\end{equation}
where $\boldsymbol{X}_i$ denote the $i$-th component of $\boldsymbol{X}$. Gini index is a well-known tool in economics used for measuring income inequality. In insurance, the Gini index and its modifications have been used to compare the riskiness of different insurance portfolios. We remark that there are various definitions of Gini index and the definition (\ref{gini-def}) we follow is the one defined in \cite{Samanthi2016}.
\begin{corollary}
Let $\boldsymbol{Y}_1,\boldsymbol{Y}_2$ follow (\ref{Y1Y2-assumption}), where $\boldsymbol{\Sigma}_1$ and $\boldsymbol{\Sigma}_2$ are correlation matrices and $n \in \lbrace 2, 3 \rbrace$. Assume $\boldsymbol{\mu}_1 = \boldsymbol{\mu}_2$, $\boldsymbol{\delta}_1 = \boldsymbol{\delta}_2$ and $\sigma_{1,ij} \leq \sigma_{2,ij}$ for all $1 \leq i < j \leq n$, then $G_n(\boldsymbol{Y}_2) \leq_{icx} G_n(\boldsymbol{Y}_1)$.
\end{corollary}
\begin{proof} For $n \in \lbrace 2, 3 \rbrace$, the function $-u(G_n(\mathbf{x})): \mathbb{R}^n \to \mathbb{R}$ is supermodular for any increasing and convex function $u: \mathbb{R} \to \mathbb{R}$; see Lemma 3.2 of \cite{Samanthi2016}. Given $\boldsymbol{\mu}_1 = \boldsymbol{\mu}_2$, $\boldsymbol{\delta}_1 = \boldsymbol{\delta}_2$ and $\sigma_{1,ij} \leq \sigma_{2,ij}$ for all $1 \leq i < j \leq n$, then $\boldsymbol{Y}_1 \leq_{sm} \boldsymbol{Y}_2$ can be established by applying Corollary \ref{sm2}. As a result, $E[u(G_n(\boldsymbol{Y}_2))] \leq E[u(G_n(\boldsymbol{Y}_1))]$, i.e., $G_n(\boldsymbol{Y}_2) \leq_{icx} G_n(\boldsymbol{Y}_1)$.
\end{proof}
\section{Concluding Remarks} \label{remarks}
We have introduced the so-called class of generalized location-scale mixture of elliptical distributions by incorporating the skewness. This class develops a mathematically tractable extension of the well-known multivariate elliptical distributions, and some other types of multivariate distributions studied in the literature. We further derived some sufficient and/or necessary conditions for various integral stochastic orderings of different random vectors following the GLSE distributions. Some useful practical results are provided in this paper.

To conclude the article, we discuss several interesting topics for future study. First, the assumptions for density generator $g$ proposed here are not strict but can still be simplified, and it is of low probability that the results in this paper can be derived with no prior assumptions for $g$. Then finding sufficient assumptions for $g$ should be a challenging and interesting topic for future study. Second, it will naturally be of interest to further generalize results established in this paper to some other families of distributions such as skew-elliptical distributions.

\section*{Acknowledgements}
The authors thank the anonymous referees and the editor for their helpful comments and suggestions, which have led to the improvement of this paper. Yiying Zhang acknowledges the National Natural Science Foundation of China (No. 12101336). Chuancun Yin acknowledges the National Natural Science Foundation of China (No. 12071251).

%%%%%%%%%%%%%%%%%%%%%%%%%%%%%%%%%%%%%%%%%%%%%%%%%%%%%%%%%%%%%%%%%%%%%%%%%%%%%%%%%%%%%%%%%%%%%%%%%%%%%%%%%%%%
\appendix

\section{The proof of Remark \ref{ass-satis}}
%\section*{Appendix}
In this section, we prove that all the density generators presented in Table \ref{table1} follow Assumptions \ref{ass1} and \ref{ass2}. Let $g^1= \left(1+\frac{u}{m}\right)^{-(n+m)/2}$, where $m$ is a positive integer; $g^2(u) = {\rm exp}\left( -\frac{1}{s}(u)^{s/2}\right)$, where $s>1$; $g^3(u) = {\rm exp}\left(-u\right)\left(1+{\rm exp}\left(-u\right)\right)^{-2}$. It is obvious that Cauchy distribution is a special case of Student distribution as normal distribution and Laplace distribution are special cases of exponential power distribution, so we just need to prove the aforementioned three density generators follow Assumptions \ref{ass1} and \ref{ass2}.
\begin{proof}
For $g^1$, we have
\begin{equation} \nonumber
\begin{split}
\lim_{t \to \pm \infty} \frac{\sigma_1}{\sigma_2} \frac{g^1(t_2^2)}{g^1(t_1^2)} &= \lim_{t \to \pm \infty} \frac{\sigma_1}{\sigma_2} \left(\frac{m+t_2^2}{m+t_1^2}\right)^{-\frac{m+1}{2}} = \frac{\sigma_1}{\sigma_2} \left(\lim_{t \to \pm \infty} \frac{m+t_2^2}{m+t_1^2}\right)^{-\frac{m+1}{2}}\\  &= \left(\frac{\sigma_2}{\sigma_1}\right)^m \neq 1 .
\end{split}
\end{equation}
If $\sigma_1 > \sigma_2$, then $\left(\frac{\sigma_2}{\sigma_1}\right)^m < 1$.
For $g^2$, we have
\begin{equation} \nonumber
\begin{split}
\lim_{t \to \pm \infty} \frac{\sigma_1}{\sigma_2} \frac{g^2(t_2^2)}{g^2(t_1^2)} &=\lim_{t \to \pm \infty} \frac{\sigma_1}{\sigma_2} {\rm exp}\left( \frac{1}{s} \left(t_1^s - t_2^s\right)\right) = \lim_{t \to \pm \infty} \frac{\sigma_1}{\sigma_2} {\rm exp}\left( \frac{1}{s} \left(\frac{1}{\sigma_1^s} - \frac{1}{\sigma_2^s}\right)t^s\right).
\end{split}
\end{equation}
If $\sigma_1 > \sigma_2$, then $\lim_{t \to \pm \infty} \frac{\sigma_1}{\sigma_2} \frac{g^2(t_2^2)}{g^2(t_1^2)}$ goes to zero otherwise goes to infinity.
\par For $g^3$, we have
\begin{equation} \nonumber
\lim_{t \to \pm \infty} \frac{\sigma_1}{\sigma_2} \frac{g^3(t_2^2)}{g^3(t_1^2)} =\lim_{t \to \pm \infty} \frac{\sigma_1}{\sigma_2} \frac{{\rm exp}\left(-t_2^2\right)}{{\rm exp}\left(-t_1^2\right)} \frac{\left(1+{\rm exp}\left(-t_1^2\right)\right)^2}{\left(1+{\rm exp}\left(-t_2^2\right)\right)^2}.
\end{equation}
We have
\begin{equation} \nonumber
\lim_{t \to \pm \infty} \frac{\left(1+{\rm exp}\left(-t_1^2\right)\right)^2}{\left(1+{\rm exp}\left(-t_2^2\right)\right)^2} = 1.
\end{equation}
If $\sigma_1 > \sigma_2$, then $\lim_{t \to \pm \infty} {\rm exp}\left(t_1^2 - t_2^2 \right)$ goes to zero otherwise goes to infinity. So $\lim_{t \to \pm \infty} \frac{\sigma_1}{\sigma_2} \frac{g^3(t_2^2)}{g^3(t_1^2)}$ behaves the same way.
\end{proof}

\bigskip

\end{document}